\title{\LARGE \bf From Weak Learning to Strong Learning in Fictitious Play Type Algorithms}
\author{BRIAN SWENSON$^{\dagger\star}$, SOUMMYA KAR$^\dagger$ AND JO\~{A}O XAVIER$^\star$\thanks{The work was partially supported by the FCT project FCT [UID/EEA/50009/2013] through the Carnegie-Mellon/Portugal Program managed by ICTI from FCT and by FCT Grant CMU-PT/SIA/0026/2009, and was partially supported by NSF grant ECCS-1306128. \newline$^\dagger$Department of Electrical and Computer Engineering, Carnegie Mellon University, Pittsburgh, PA 15213, USA (brianswe@andrew.cmu.edu and soummyak@andrew.cmu.edu).\newline $^\star$Institute for Systems and Robotics (ISR/IST), LARSyS, Instituto Superior T\'{e}cnico, Portugal (jxavier@isr.ist.utl.pt).}}
\newcommand\BRtildet{\bar{BR^{\epsilon_t}}}
\newtheorem{cor}{Corollary}
\newtheorem{assumption}{A.}
\begin{document}
\maketitle
\thispagestyle{empty}
\begin{abstract}
The paper studies the highly prototypical Fictitious Play (FP) algorithm, as well as a broad class of learning processes based on best-response dynamics, that we refer to as FP-type algorithms.
A well-known shortcoming of FP is that, while players may learn an equilibrium strategy in some abstract sense, there are no guarantees that the period-by-period strategies generated by the algorithm actually converge to equilibrium themselves. This issue is fundamentally related to the discontinuous nature of the best response correspondence and is inherited by many FP-type algorithms. Not only does it cause problems in the interpretation of such algorithms as a mechanism for economic and social learning, but it also greatly diminishes the practical value of these algorithms for use in distributed control.
We refer to forms of learning in which players learn equilibria in some abstract sense only (to be defined more precisely in the paper) as weak learning, and we refer to forms of learning where players' period-by-period strategies converge to equilibrium as strong learning. An approach is presented for modifying an FP-type algorithm that achieves weak learning in order to construct a variant that achieves strong learning. Theoretical convergence results are proved.
\end{abstract}
\begin{keywords}
game-theoretic learning, repeated play, fictitious play, strong convergence
\end{keywords}

\section{Introduction}
Fictitious Play (FP), introduced in \cite{Brown51}, is one of the oldest and best-known game theoretic learning algorithms. FP has been shown to be an effective algorithm for distributed learning of Nash equilibria in various classes of games including two-player zero-sum games \cite{robinson1951iterative}, generic $2\times m$ games \cite{berger2005fictitious}, supermodular games
%\cite{milgrom1991adaptive}
\cite{milgrom1990rationalizability,berger2008learning}, one-against-all games \cite{sela1999fictitious}, and potential games \cite{Mond96,benaim2005stochastic}. However, the manner in which players \emph{learn} in FP is often unsatisfactory, especially in the context of distributed control.

In FP, players learn equilibrium strategies in the sense that the time-averaged empirical distribution of players' actions converges to the set of Nash equilibria ---a form of learning known as \emph{convergence in empirical distribution}. This notion of learning tends to be problematic when the limit set of a learning algorithm contains mixed-strategy equilibria. In particular, convergence of the time-averaged empirical distribution to a mixed-strategy equilibrium does not imply any form of convergence in players' period-by-period strategies or actions. In practice, players' period-by-period strategies tend to move in progressively longer and longer cycles around an equilibrium set---the time-averaged empirical distribution is driven to equilibrium, but the period-by-period strategies never approach the equilibrium set themselves.

In the context of repeated-play algorithms, we refer to convergence of the empirical distribution (or some function thereof) to an equilibrium set as weak convergence, and we refer to any form of learning involving weak convergence as weak learning. We refer to the convergence of players' period-by-period strategies to an equilibrium set as strong convergence, and we refer to any form of learning involving strong convergence as strong learning. Intuitively speaking, weak learning means that players learn an equilibrium strategy in some abstract sense (i.e., convergence in empirical distribution) but may never actually implement the strategy they are learning. In strong learning, not only do players \emph{learn} an equilibrium strategy, but they also implement it.
% the learned strategy on a period-by-period basis as the algorithm progresses.
%but the strategies used to generate the actions in each period also converge to equilibrium; i.e., players don't just learn a strategy, they also implement it.

FP is proven to achieve learning only in the weak sense, and thus no guarantees can be made regarding the convergence nor optimality of players period-by-period strategies. For example, Jordan \cite{jordan1993} presents a continuum of games for which FP achieves weak learning, yet in all but a countable subset of games, the period-by-period strategies produced by FP never approach the game's unique equilibrium. As another example, Young \cite{young2004strategic} presents a $2\times 2$ game in which FP achieves weak learning, but the period-by-period actions produced by FP achieve the lowest possible utility in every stage of the repeated play (see also Section \ref{sec_weak_convergence}).

Our first main contribution is the presentation of a simple variant of FP that converges strongly to equilibrium. In our strongly convergent variant of FP, players gradually and independently transition from using the FP best response rule to determine the next-iteration action, to using their current empirical distribution as a probability mass function from which they sample to determine the next-iteration action. We show that, for any game in which FP can be shown to converge weakly to equilibrium (and for which a certain robustness assumption holds---see \textbf{A.\ref{a_robustness}}), our variant of FP will converge strongly to equilibrium.

One advantage of this approach is that it is readily applicable to more general FP-type learning algorithms. Our second (and more general) main contribution is a method for taking a weakly convergent FP-type learning algorithm, and constructing from it, a strongly convergent variant.
We study a general class of FP-type algorithms and show that, so long as an algorithm achieves weak learning in a sufficiently robust sense (see \textbf{A.\ref{a_robustness}}), then a strongly convergent variant of the algorithm can be constructed. As an example of how the general result may be applied, we consider three weakly convergent FP-type algorithms---classical FP, Generalized Weakened FP \cite{leslie2006generalised}, and Empirical Centroid FP \cite{swenson2012ECFP,Swenson-MFP-Asilomar-2012}---and construct the strongly convergent variant of each.

\subsection{Related Work}
An overview of the topic of learning in games can be found in \cite{fudenberg1998theory,young2004strategic}. Various problems associated with learning mixed-strategy equilibria in best-response-type learning algorithms (including FP-type algorithms) are discussed in \cite{jordan1993}. In particular, the issue of weak convergence is considered, along with a discussion of some of the underlying mechanics that lead to weak convergence.

Many learning algorithms are designed to ensure that their limit points are pure-strategy equilibria \cite{marden06,marden-payoff,chasparis2010aspiration,pradelski2012learning,marden-shamma-08}. Ensuring convergence to a pure strategy is a natural way of ensuring strong learning, since weak learning can generally only occur when the limit set contains mixed strategies.
%Moreover, learning pure strategy equilibria can be desirable and useful since, in many scenarios, pure strategy solutions have clear physical interpretations.

In contrast, this paper studies a method of ensuring strong convergence when the limit set of the algorithm contains mixed strategies. The ability to (strongly) learn mixed equilibria is important for many reasons, the foremost being that, in finite games, the set of Nash equilibria (NE) is only guaranteed to be non-empty if mixed equilibria are considered. Mixed strategies play an important role when the learned strategy needs to be robust to uncertainty in opponent behavior or game structure, or secure against the actions of malicious players \cite{rass2014numerical,voorneveld1999pareto,alpcan2010network,sela1999fictitious,dabcevic2014fictitious}. With regards to FP in particular, it was recently shown in \cite{candogan2013dynamics} that, for the class of near-potential games, the limit set of the FP dynamics (weakly speaking) is a neighborhood of a mixed equilibrium.

Regret-testing algorithms \cite{foster2003regret},\cite{germano2007global} achieve strong convergence to mixed-strategy equilibria in generic finite games. However, such algorithms operate on fundamentally different principles from FP-type algorithms---players implement a form of exhaustive search to coordinate on a NE strategy. Such algorithms tend to have slow convergence rates, especially when the number of players or available actions is large.

Stochastic FP (SFP)---introduced in \cite{Fud92}---was proposed as a learning mechanism that could (i) mitigate the problem of weak convergence to mixed equilibria in FP and (ii) provide a reasonable explanation for why real-world players might learn mixed-strategy equilibria. In SFP, the issue of weak convergence is addressed by smoothing each player's best response correspondence with the addition of small random shocks or perturbations. The stable points of SFP are not Nash equilibria, but rather Nash distributions. The set of Nash distributions converges to the set of Nash equilibria as the size of the perturbations goes to zero \cite{Fud92}. SFP has been shown to obtain strong convergence to the set of Nash distributions in various classes of games \cite{hofbauer2002global,benaim2005stochastic,fudenberg1998theory}. Moreover, if the perturbations are permitted to gradually decay throughout the course of the repeated play, then SFP converges to the set of NE \cite{leslie2006generalised}.

In contrast to SFP, the present work does not consider the descriptive agenda of providing an explanation for why real-world learners might act according to a given behavior rule. Furthermore, we present a simple and intuitive procedure for modifying a variety of weakly convergent learning algorithms in order to obtain a strong convergent variant. From a technical perspective, the current work differs from SFP in that the best response correspondence is not directly smoothed in any way.

The work \cite{leslie2006generalised} by Leslie  et al. studies a useful generalization of FP termed Generalized Weakened FP (GWFP). Among other contributions, the paper demonstrates that the convergence of FP is not affected by asymptotically decaying perturbations to players' best response sets. This result provides a cornerstone for our proofs by ensuring that FP (and GWFP) meet the critical robustness assumption \textbf{A.\ref{a_robustness}}. We study a strongly convergent variant of GWFP in Section \ref{sec_apps2}. Furthermore,  \cite{leslie2006generalised} also presents a payoff-based, actor-critic learning algorithm based on GWFP that achieves strong learning. Our work differs from this in that we provide a general method for constructing a strongly convergent algorithm from a weakly convergent one in a setting where instantaneous payoffs information may or may not be available.

Our preliminary results on strong convergence in FP is found in \cite{swenson2014strong}. The present work expands on \cite{swenson2014strong} by considering algorithms beyond classical FP and establishing more general conditions under which convergence can be attained (in particular, see \textbf{A.\ref{rho_a1}}--\textbf{A.\ref{rho_a3}}). Furthermore, \cite{swenson2014strong} contains a gap in reasoning in the proof of Lemma 2 which the present paper fills in.

The remainder of the paper is organized as follows. Section \ref{sec_prelims} sets up notation to be used in the subsequent development. Section \ref{sec_FP} introduces classical FP and discusses the problem of weak convergence in classical FP. Section \ref{sec_strong_fp} presents the strongly convergent variant of classical FP and states the strong convergence theorem for classical FP. Section \ref{sec_general_setup} presents the general notion of an FP-type algorithm, then presents the strongly convergent variant of an FP-type algorithm, states the general strong convergence result in the context of an FP-type algorithm, and presents the proof of the result. In Section \ref{sec_apps}, the general result is applied to prove strong convergence in classical FP, Generalized Weakened FP, and Empirical Centroid FP. Section \ref{sec_conclusion} concludes the paper.

\section{Preliminaries}
\label{sec_prelims}
\subsection{Setup and Notation}
A game in normal form is represented by the triple $\Gamma := (N,(Y_i,u_i)_{i\in N})$, where $N = \{1,\ldots,n\}$ denotes the set of players, $Y_i$ denotes the finite set of actions available to player $i$, and $u_i:\prod_{i\in N}Y_i \rightarrow \mathbb{R}$ denotes the utility function of player $i$. Denote by $Y:= \prod_{i\in N} Y_i$ the joint action space.

In order to guarantee the existence of Nash equilibria it is necessary to consider the mixed extension of $\Gamma$ in which players are permitted to play probabilistic strategies. Let $m_i := |Y_i|$ be the cardinality of the action space of player $i$, and let $\Delta_i := \{p\in \mathbb{R}^{m_i}:\sum_{k=1}^{m_i}p(k) = 1,~p(k)\geq 0 ~\forall k\}$ denote the set of mixed strategies available to player $i$---note that a mixed strategy is probability distribution over the action space of player $i$. Denote by $\Delta^n := \prod_{i\in N} \Delta_i$, the set of joint mixed strategies.

In this context, we often wish to retain the notion of playing a deterministic action. For this purpose, let $A_i := \{e_1,\ldots,e_{m_i}\}$ denote the set of ``pure strategies'' of player $i$, where $e_j$ is the $j$-th cannonical vector containing a $1$ at position $j$ and zeros otherwise.
%Let $A^n := \prod_{i=1}^n A_i$ denote the set of joint pure strategies.

The mixed utility function of player $i$ is given by $U_i(p) := \sum_{y \in Y} u_i(y) p_1(y)\ldots p_n(y)$, where $U_i:\Delta^n \rightarrow \mathbb{R}$. When convenient we sometimes write $U_i(p)$ as $U_i(p_i,p_{-i})$, where $p_i$ denotes the mixed strategy of player $i$ and $p_{-i}$ denotes the mixed strategies of all other players.
The set of Nash equilibria is given by $NE := \{p\in \Delta^n: U_i(p_i,p_{-i}) \geq U_i( p_i',p_{-i}), ~\forall p_i' \in \Delta_i,~\forall i\in N\}$.
%We denote by
%$BR_{i}(p_{-i}) := \arg\max_{p_i \in \Delta_i} U_i(p_i,p_{-i})$
%the $i$-th player's set of best responses to a strategy profile $p_{-i}$ adopted by the other players.
Let
\vskip-15pt
\begin{equation}
BR_i^{\epsilon}(p_{-i}) := \{a_i \in A_i: U(a_i,p_{-i}) \geq \max_{\alpha_i \in A_i} U(\alpha_i,p_{-i})-\epsilon\}
\label{BR_epsilon_set}
\end{equation}
\vskip-5pt
\noindent
be the $i$-th players set of $\epsilon$-best responses to a strategy profile $p_{-i}$ adopted by the other players. Note that in this definition we only consider pure-strategy $\epsilon$-best responses.
Denote by $v_i(p_{-i}) := \max_{p_i\in \Delta_i} U_i(p_i,p_{-i}),$ the value obtained by playing a best response.

Throughout, we assume there exists a probability space $(\Omega,\mathcal{F},\mathbb{P})$ rich enough to carry out the construction of the various random variables required in this paper. For a random object $X$ defined on a measurable space $(\Omega,\mathcal{F})$, let $\sigma(X)$ denote the $\sigma$-algebra generated by $X$ \cite{williams_book}. As a matter of convention, all equalities and inequalities involving random objects are to be interpreted almost surely (a.s.) with respect to the underlying probability measure, unless otherwise stated.

\subsection{Repeated Play}
Suppose players repeatedly face off in the game $\Gamma$. Denote by $t\in \{1,2,\ldots\}$ a round of the repeated play. Let $\{a_i(t)\}_{t\geq 1}$ denote the sequence of actions taken by player $i$, where $a_i(t) \in A_i$, and let $\{a(t)\}_{t\geq 1}$, $a(t) = (a_1(t),\ldots,a_n(t))$ denote the sequence of joint actions.

Let $\{\mathcal{F}_t\}_{t\geq 1}$ be a filtration (sequence of $\sigma$-algebras) that contains the information available to players in round $t$ of the repeated play.
For $t\geq 1$ and $\alpha_i \in A_i$, let $g(\alpha_i,~t)\in\mathbb{R}$ be an $\mathcal{F}_{t-1}$-measurable random variable with $g_i(\alpha_i,~t) := \mathbb{P}(a_i(t) = \alpha_i\vert \mathcal{F}_{t-1})$, and let $g_i(t)\in \Delta_i$ be the vector with components $g_i(t) := (g_i(\alpha_1,~t),\ldots,g_i(\alpha_{m_i},~t))$, where $m_i$ is the cardinality of $A_i$.
We say $g_i(t)$ is the mixed strategy used by player $i$ in round $t$, and we say $\{g_i(t)\}_{t\geq}$ is the sequence of period-by-period (mixed) strategies used by player $i$. The sequence of joint period-by-period strategies is given by $\{g(t)\}_{t\geq 1}$, $g(t) := (g_1(t),\ldots,g_n(t))$.

Denote by $q_i(t) \in \Delta_i$, the empirical distribution of player $i$. The precise manner in which the empirical distribution\footnote{The term \emph{empirical distribution} is often used to refer explicitly to the time-averaged histogram of the action choices of some player $i$; i.e., $q_i(t) = \frac{1}{t}\sum_{s=1}^t a_i(s)$. Here, we allow for a broader definition that will permit interesting and useful algorithmic generalizations.} is formed will depend on the algorithm at hand. In general, $q_i(t)$ is formed as a function of the action history $\{a_i(s)\}_{s=1}^t$ and serves as a compact representation of the action history of player $i$ up to and including the round $t$. The joint empirical distribution is given by $q(t) := (q_1(t),\ldots,q_n(t))$.

Unless otherwise stated, $d(\cdot,~\cdot)$ denotes the standard Euclidean norm. For $m\geq 1$ and $S \subset \mathbb{R}^m$ define the distance from $p\in \mathbb{R}^m$ to $S\subset\mathbb{R}^m$ by $d(p,~S) := \inf\{d(p,~p'):~p'\in S\}$.  We say a repeated-play learning process converges \emph{weakly} to equilibrium if for some map $f:\Delta^n\rightarrow\Delta^n$ there holds $d(f(q(t)),~NE)\rightarrow 0$ as $t\rightarrow \infty$. In most cases in this paper, $f$ will simply be the identity function. We say a repeated-play learning process converges \emph{strongly}\footnote{The notion of strong convergence presented in this paper is comparable to the notions of ``convergence in intended behavior'' presented in \cite{Fud92} and ``convergence in strategic intentions'' given in \cite{young2004strategic}.} to equilibrium if $d(g(t),~NE)\rightarrow 0$ as $t\rightarrow \infty$. Note that weak learning implies that players \emph{learn} an equilibrium strategy, but may never actually begin to implement the strategy that is being learned. On the other hand, in strong learning players both \emph{learn} an equilibrium strategy, and implement the strategy that is being learned (see Section \ref{sec_weak_convergence} for more details).
%in order for strong convergence to occur, it is not sufficient for players to merely \emph{learn} an equilibrium strategy in some abstract sense---they must also implement the strategy that is being learned.

\section{Fictitious Play}
\label{sec_FP}
\subsection{Fictitious Play}
\label{sec_FP_subsection}
Let
\vskip-20pt
\begin{equation}
\label{q_FP}
q_i(t) := \frac{1}{t}\sum_{s=1}^t a_i(s),
\end{equation}
\vskip-5pt
\noindent be the normalized histogram\footnote{Recall that the actions $a_i(t)\in A_i$ are dirac distributions in the mixed-strategy space $\Delta_i$.} of the actions of player $i$.

FP may be intuitively understood as follows. Players repeatedly face off in a stage game $\Gamma$. In any given stage of the game, players choose a next-stage action by assuming (perhaps incorrectly) that opponents are using stationary and independent strategies. Thus, in FP, players use the marginal empirical distribution of each opponent's past play, $q_i(t)$, as a prediction of the opponent's behavior in the upcoming round and choose a next-round strategy which is a best response against this prediction.

A sequence of actions $\{a(t)\}_{t\geq 1}$ such that\footnote{In all variants of FP discussed in this paper, the initial action $a_i(1)$ may be chosen arbitrarily for all $i$.\label{footnote_initial_cond}}
\vskip-15pt
\begin{equation}
a_i(t+1) \in BR_i(q_{-i}(t)),~\forall i,
\label{FP_BR}
\end{equation}
\vskip-5pt
\noindent for all $t\geq 1$, is referred to as a \emph{fictitious play process}. FP has been studied extensively to determine the classes of games for which it can be said to converge (weakly) to the set of Nash equilibria. %\cite{robinson1951iterative,miyasawa1961convergence,Mond96,benaïm1999mixed}.
Among other results, it has been shown that FP leads to weak learning in two-player zero-sum games \cite{robinson1951iterative}, potential games \cite{Mond96}, and generic $2\times m$ games \cite{berger2005fictitious}. We summarize these results in the following theorem.
\begin{theorem}
Let $\Gamma = (N,\{u_i(\cdot)\}_{i\in N},Y^n)$ be a two-player zero-sum game, potential game, or generic $2\times m$ game, and let $\{a(t)\}_{t\geq 1}$ be a fictitious play process on $\Gamma$. Then $d(q(t),~NE)\rightarrow 0$ as $t\rightarrow \infty$.
\label{theorem_classical_fp}
\end{theorem}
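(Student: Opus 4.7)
The theorem is a composite statement summarizing three classical convergence results, each proved by rather different techniques, so the plan is to treat the three cases separately. The unifying viewpoint is to regard the empirical process $\{q(t)\}$ as a discretization of the continuous-time best-response dynamics $\dot q \in BR(q) - q$ (a differential inclusion, since $BR$ is set-valued), and to look, in each class, for a class-specific Lyapunov function on $\Delta^n$.

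For two-player zero-sum games I would follow Robinson's inductive approach. Define the exploitation gaps $\delta_i(t) := v_i(q_{-i}(t)) - U_i(q(t))$ for $i=1,2$ and aim to show $\delta_i(t)\to 0$, which for zero-sum games is equivalent to $d(q(t),NE)\to 0$. The induction is on $(m_1,m_2)$: during any sufficiently long time window each player $i$ plays only a best response to the running $q_{-i}$, so partitioning time according to which pure action is played decomposes the FP process on the full game into FP subprocesses on smaller games, where the zero-sum structure lets one bound the full value by the sub-game values.

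For potential games the plan is to use the exact potential $\phi:\Delta^n\to\mathbb{R}$ directly as a Lyapunov function. Along the continuous-time FP flow one verifies that $\phi(q(t))$ is nondecreasing, strictly so off $NE$; in discrete time this translates to $\phi(q(t+1)) \ge \phi(q(t)) - o(1/t)$, using the recursion $q(t+1)-q(t) = \frac{1}{t+1}(a(t+1)-q(t))$ together with the definition of a best response. Since $\phi$ is bounded on $\Delta^n$, telescoping forces the drift to vanish; an invariance argument (the largest invariant set where $\dot\phi=0$ under the BR-flow is $NE$) then yields $d(q(t),NE)\to 0$. For $2\times m$ games the plan is the combinatorial one due to Berger: with $n=2$ the continuous-time best-response trajectory is piecewise-linear and can be tracked by listing the sequence of best-response cells visited; under genericity, any hypothetical non-equilibrium limit cycle contradicts the indifference relations defining its vertices, so $q(t)$ must approach $NE$.

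The main obstacle in all three cases is the discontinuity of $BR(\cdot)$, which prevents any direct appeal to smooth Lyapunov machinery; one must instead work with the differential inclusion and its upper semi-continuous right-hand side, and verify a Lyapunov-type decrease in a set-valued sense. I expect the potential-games case to be cleanest, since $\phi$ is handed to us; the zero-sum case requires Robinson's delicate inductive bookkeeping; and the $2\times m$ case requires a genuinely combinatorial case analysis of improvement paths that does not generalize to larger games.
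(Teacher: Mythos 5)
The paper itself does not prove this theorem: it is stated explicitly as a summary of known results, and its ``proof'' is by citation---Robinson \cite{robinson1951iterative} for two-player zero-sum games, Monderer--Shapley \cite{Mond96} (and, in the modern stochastic-approximation form, \cite{benaim2005stochastic}) for potential games, and Berger \cite{berger2005fictitious} for generic $2\times m$ games. Measured against those sources, your outline is essentially the standard one: Robinson's induction on the sizes of the payoff matrices for the zero-sum case, the potential as a Lyapunov function for the best-response differential inclusion $\dot q \in BR(q)-q$ with the discrete FP process treated as a perturbed discretization of it, and Berger's combinatorial tracking of best-response regions for $2\times m$. What the paper's route (pure citation) buys is exactly what it needs---the statement is only an input to Corollary 1 via the robustness assumption \textbf{A.6}, which is in turn supplied by \cite{leslie2006generalised}---whereas your route buys an actual self-contained argument, at the cost of reproducing three technically unrelated proofs.

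Two soft spots worth flagging if you were to write this out. First, in the potential-game case the one-line drift claim $\phi(q(t+1))\geq \phi(q(t))-o(1/t)$ followed by ``telescoping'' is too quick: the exact expansion gives $\phi(q(t+1))-\phi(q(t)) = \frac{1}{t+1}\sum_i\bigl(U_i(a_i(t+1),q_{-i}(t))-U_i(q(t))\bigr)+O(1/t^2)$ for an exact potential, and the first term is nonnegative by the best-response property but telescoping only shows its Ces\`aro average vanishes; concluding $d(q(t),NE)\to 0$ requires either Monderer--Shapley's additional argument or the asymptotic-pseudotrajectory/invariance machinery of \cite{benaim2005stochastic}, which you gesture at but should make the actual load-bearing step. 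Second, in the zero-sum case the equivalence between vanishing exploitation gaps $\delta_i(t)\to 0$ and $d(q(t),NE)\to 0$ should be stated and justified (it follows from continuity of $v_i$ and $U_i$, compactness of $\Delta^n$, and the fact that the exploitability vanishes exactly on the equilibrium set); Robinson's theorem is literally about convergence of the empirical payoff bounds to the value, so this bridge is needed to get the set-distance statement in the theorem. Neither issue is fatal---both are handled in the cited literature---but as written they are the places where your sketch is thinner than the proofs it summarizes.
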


\subsection{Weak Convergence in Fictitious Play}
\label{sec_weak_convergence}
The following example (see \cite{young2004strategic}, p. 78), while fairly simple, clearly illustrates the phenomenon of weak convergence in FP, and demonstrates why weak convergence can be a deeply unsatisfactory notion of learning.

%scenario, given by Young \cite{young2004strategic}, is a simple example illustrating the manner in which a FP process may obtain weak convergence (convergence in empirical frequency), but never obtain strong convergence (convergence in period-by-period strategies).

\begin{wrapfigure}{l}{0.35\textwidth}
  \begin{center}
    \includegraphics[width=0.28\textwidth]{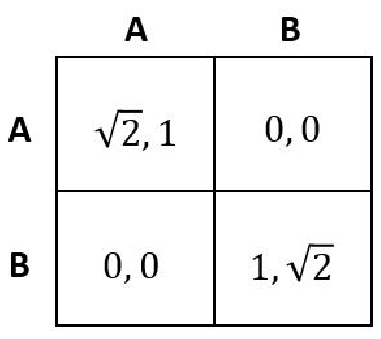}
  \end{center}
  \caption{}\label{fig:miscoord_game}
\end{wrapfigure}

Consider the two-player asymmetric coordination game shown in Figure \ref{fig:miscoord_game}. The game has three Nash equilibria: both players play A, both players play B, and an asymmetric mixed-strategy Nash equilibrium. The game is a potential game \cite{Mond96} (in fact, an identical interests game \cite{Mond01}) and hence falls within the purview of Theorem \ref{theorem_classical_fp}---regardless of the initial conditions, players engaged in an FP process will learn an equilibrium in the weak sense that $d(q(t),~NE)\rightarrow 0$ as $t\rightarrow\infty$.

Suppose that the players are engaged in an FP process on this game, and in the first round they miscoordinate their actions (e.g., one chooses A, and the other chooses B). Young \cite{young2004strategic} shows the somewhat counterintuitive result that the FP dynamics will in fact lead players to miscoordinate their action choices in every subsequent round of the learning process. Thus, despite the fact that $\lim_{t\rightarrow\infty} d(q(t),~NE) = 0$, the players' realized action choices are extremely suboptimal---yielding the lowest possible utility in each round of play. Intuitively speaking, this phenomenon occurs when players' actions cycle in such a way as to drive the time-averaged empirical distribution to a mixed-strategy Nash equilibrium, yet player's period-by-period strategies never constitute (nor even approach) a Nash equilibrium themselves.

It may be said that in weak learning players ``learn'' a NE strategy in some abstract sense, but never actually implement the strategy they are learning. In strong learning, players not only learn a NE strategy, but they also physically implement the strategy that is being learned.

The following section presents a simple modification of FP that achieves strong learning; i.e., players' period-by-period strategies converge to equilibrium in addition to convergence of the empirical distributions.

%(Explain more about weak convergence. Like it's a problem that's only associated with learning mixed eqiulibria (pure NE are absorbing in FP). Maybe talk about other problems with learning mixed NE. Motivate strong FP by clearly showing how it would be advantageous in this scenario. Have to balance this with what we put in the intro. Also, should we motivate this is a general method for obtaining strong convergence from weak? Like, point out other scenarios where weak convergence obtains (regret matching) and suggest that similar adaptations may result in strongly convergent algorithms in those scenarios.)

\section{Strong Convergence in Classical Fictitious Play}
\label{sec_strong_fp}
%\subsection{Informal Discussion}
%In this section we present a simple adaptation of FP in which players choose their stage actions according to an evolving mixed strategy. The main result is to show that the players' period-by-period mixed strategies---from which the stage actions are randomly chosen---converge to Nash equilibrium.

Consider a variant of FP in which the action for player $i$ at time $t$ is chosen by drawing a random sample from the mixed strategy (i.e., probability distribution) $g_i(t)$, where
\begin{equation}
g_i(t) \in BR_i(q_{-i}(t-1))\rho_i(t) + q_i(t-1)(1-\rho_i(t)),
\label{strong_FP_informal}
\end{equation}
$\rho_i(t) \in [0,1]$, and $\lim_{t\rightarrow\infty} \rho_i(t) = 0$. Intuitively, this is similar to the classical FP process \eqref{FP_BR}, but rather than playing a deliberate best response each round, players gradually transition toward drawing their stage $t$ action as a random sample from their own empirical distribution, $q_i(t)$.

The idea is that players will play a best response sufficiently often so that, per FP, the empirical distribution $q(t)$ will be driven toward equilibrium, as in Theorem \ref{theorem_classical_fp}. Then, since $\rho_i(t) \rightarrow 0$ as $t\rightarrow \infty$, the mixed strategy $g_i(t)$ tends towards $q_i(t)$, which is itself tending towards equilibrium. Informally, \eqref{strong_FP_informal} captures the main idea of strongly convergent FP. A formal presentation of the algorithm is given below.

\subsection{Strongly Convergent Variant of Classical FP}
\label{sec_strong_FP_construction}
Consider a variant of FP in which the action for player $i$ at time $t$ is chosen according to the following randomized rule:
\vskip-20pt
\begin{equation}
a_i(t) \sim g'_i(t) :=
\begin{cases}
b_i(t-1), & \mbox{ if } X_i(t) = 1,\\
q_i(t-1), & \mbox{otherwise},
\end{cases}
\label{action_rule0}
\end{equation}
where
$b_i(t-1) \in BR_i(q_{-i}(t-1)),$ the notation
$a_i(t) \sim g'_i(t)$ indicates that the action $a_i(t)$ is drawn as a random sample\footnote{The action $a_i(t) \in A_i$ is technically a dirac distribution over the finite action space $Y_i$ (see Section \ref{sec_prelims}), and the mixed strategy $g_i'(t)$ is a probability distribution over $Y_i$. More precisely, the notation $a_i(t) \sim g_i'(t)$ means that an action $y_i(t)$ is drawn as a random sample from $g_i'(t)$ with $a_i(t) := \delta_{y_i(t)}(y_i)$, where $\delta_{y_i(t)}(y_i) = 1$ if $y_i = y_i(t)$ and $\delta_{y_i(t)}(y_i) = 0$ otherwise.} from the probability mass function $g'_i(t)$, $X_i(t) \in \{0,1\}$ is a random variable, and $q_i(t)$ is the player's empirical distribution as defined in \eqref{qt_update2} below.
Let $\mathcal{F}_t := \sigma(\{a(s),X_1(s),\ldots,X_n(s),b_1(s),\ldots,b_n(s)\}_{s\leq t}),$
and note that $g'_{i}(t)$ is $\mathcal{F}_{t}$-measurable.
Let
\vskip-20pt
\begin{equation}
\rho_i(t) := \mathbb{P}(X_i(t) = 1\vert~\mathcal{F}_{t-1}),
\end{equation}
\vskip-5pt
\noindent and note that $\rho_i(t)$ is $\mathcal{F}_{t-1}$-measurable.
Intuitively speaking, $\rho_i(t)$ represents the probability that player $i$ deliberately chooses to play a best response strategy in round $t$ given the history of play up through the previous round.
We make the following assumptions regarding each player's probability of deliberately choosing a best response:
\begin{assumption}
$\lim\limits_{t\rightarrow\infty} \rho_i(t) = 0, ~\forall i\in N$, a.s.,
\label{rho_a1}
\end{assumption}
\begin{assumption}
$\sum\limits_{t\geq 1} \rho_i(t) = \infty, ~\forall i\in N$, a.s.,
\label{rho_a2}
\end{assumption}
\begin{assumption}
$\lim\limits_{t\rightarrow\infty} \frac{\sum_{k=1}^t\rho_i(k)}{\sum_{k=1}^t\rho_j(k)} = 1, ~\forall i,j \in N,$ a.s.
\label{rho_a3}
\end{assumption}

The first assumption ensures that players eventually transition towards playing their next-stage action as a sample from their empirical distribution rather than playing a deliberate best response. The second assumption ensures that, for each player, a deliberate best response is played infinitely often. The third assumption ensures that the number of deliberate best responses taken by each player remain relatively in sync.\footnote{Note that since $\rho_i(t)$ is only required to be $\mathcal{F}_{t-1}$-measurable, this parameter is in fact adaptively tunable. This is a feature of practical interest since it allows players to adjust their deliberate best response rates on the fly---possibly adapting to the (initially unknown) deliberate best response rates of others and to underlying process dynamics---in order to satisfy \textbf{A.\ref{rho_a1}}--\textbf{A.\ref{rho_a3}}.}
In practice, players may choose their deliberate best responses completely asynchronously; for example, setting $\rho_i(t) = 1/t^{r},~\forall i$, with $r\in (0,1]$, results in (purely) independent sampling of deliberate best response rounds and secures \textbf{A.\ref{rho_a1}}--\textbf{A.\ref{rho_a3}}.

%. For example, the algorithm may be initialized without any prespecified sequence $(\rho_i(t))_{t\geq 1}$; player $i$ may then choose values for $\rho_i(t)$ satisfying \textbf{A.\ref{rho_a1}}--\textbf{A.\ref{rho_a3}} on the fly as information is gathered about the deliberate best response rates of opponents.}

Let
\vskip-25pt
\begin{equation}
\ell_i(t) := \sum\limits_{k=1}^{t} X_i(k)
\label{ell_def}
\end{equation}
\vskip-5pt
\noindent count the number of times player $i$ has deliberately played a best response until and including round $t$. Note that $\ell_i(t)$ is $\mathcal{F}_t$-measurable.  The empirical distribution $q_i(t)$ is defined recursively as\footnote{To initialize the process, let the action $a_i(1)$ be chosen arbitrarily, let $q_i(1) = a_i(1)$, and let $X_i(1) = 1$ for all $i$.}
\vskip-15pt
\begin{equation}
q_i(t+1) = q_i(t) + \frac{1}{\ell_i(t+1)}\left(a_i(t+1) - q_i(t)\right)X_i(t+1).
\label{qt_update2}
\end{equation}
\vskip-5pt
\noindent Intuitively speaking, the empirical distribution \eqref{qt_update2} is updated only over rounds when a deliberate best response was played. Note that $q_i(t)$ is $\mathcal{F}_t$-measurable.\footnote{Note that, \eqref{action_rule0} implicitly assumes that players have knowledge of the empirical distributions of opponents when computing a best response. This may be accomplished by assuming that players actions are accompanied with a ``tag'' indicating whether or not the played action was a deliberate best response. Alternatively, the information regarding $q_i(t)$ may tracked by the individual player $i$ and disseminated by a gossip-type algorithm \cite{swenson2012ECFP} or implicitly disseminated through a payoff-based scheme.}

Finally, let
\vskip-20pt
\begin{equation}
g_i(t) := b_i(t-1)\rho_i(t) + q_i(t-1)(1-\rho_i(t)),
\label{g_def}
\end{equation}
\vskip-5pt
\noindent and note that $g_i(t)$ is $\mathcal{F}_{t-1}$ measurable.\footnote{To see this, note first that $q_i(t-1)$ and $\rho_i(t)$ have been shown to be $\mathcal{F}_{t-1}$ measurable. Furthermore, this implies that $BR_i(q_i(t-1))$ is $\mathcal{F}_{t-1}$-measurable. Lastly, by construction, $b_i(t) \in BR_i(q_i(t-1))$ is $\mathcal{F}_{t-1}$-measurable.} More importantly, note that for every $\alpha_i \in A_i$,
$g_i(\alpha_i,t) = \mathbb{P}(a_i(t) = \alpha_i\vert~\mathcal{F}_{t-1}),$
and thus $g_i(t)$ represents the mixed strategy (conditioned on past play) used by player $i$ in round $t$. The joint mixed strategy used in round $t$ is given by $g(t) := (g_1(t),\ldots,g_n(t))$.

We refer to a process where, for each player $i$, $a_i(t)$ is updated according to \eqref{action_rule0}, $q_i(t)$ is updated according to \eqref{qt_update2}, and $g_i(t)$ is updated according to \eqref{g_def} as the strongly convergent variant of (classical) FP (for reasons to be clear soon).

\subsection{Strong Convergence in Classical FP: Main Result}
The following result states that in the strongly convergent variant of FP, players' period-by-period mixed strategies converge to the set of Nash equilibria---i.e., strong learning is achieved.
\begin{cor}
Let $\Gamma$ be a two-player zero-sum game, potential game, or generic $2\times m$ game. Assume \textbf{A.\ref{rho_a1}}--\textbf{A.\ref{rho_a3}} hold. Then the strongly convergent variant of FP achieves strong learning in the sense that $\lim_{t\rightarrow\infty} d(g(t),~NE) = 0$ almost surely.
\label{theorem_main_result}
\end{cor}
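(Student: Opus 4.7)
The plan is to reduce Corollary \ref{theorem_main_result} to Theorem \ref{theorem_classical_fp} by viewing the strongly convergent variant of FP, restricted to the subsequence of deliberate best-response rounds, as an asymptotically perturbed classical FP process, and then invoking the robustness of classical FP to such perturbations (the robustness captured by assumption A.\ref{a_robustness} mentioned in the introduction, which is known to hold for the game classes of Theorem \ref{theorem_classical_fp} by the GWFP analysis of Leslie et al.\ \cite{leslie2006generalised}).

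First, by A.\ref{rho_a2} and a conditional Borel--Cantelli argument, $\ell_i(t) \to \infty$ a.s. Let $\tau_i(k) := \min\{t : \ell_i(t) = k\}$ denote the round at which player $i$ performs their $k$-th deliberate best response. The empirical distribution $q_i$ is constant between consecutive BR rounds for player $i$; on a BR round, the update rule \eqref{qt_update2} reads
\[
q_i(\tau_i(k)) = \left(1 - \tfrac{1}{k}\right) q_i(\tau_i(k)-1) + \tfrac{1}{k}\, b_i(\tau_i(k)-1),
\]
with $b_i(\tau_i(k)-1) \in BR_i(q_{-i}(\tau_i(k)-1))$. On player $i$'s private ``BR clock,'' this is exactly a classical FP recursion with step size $1/k$, except that the opponents' empirical distributions $q_{-i}$ are read at player $i$'s own (asynchronous) BR times rather than at a common synchronous clock.

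Next, I would argue that the asynchrony is asymptotically negligible. By A.\ref{rho_a3}, $\ell_j(t)/\ell_i(t) \to 1$ a.s., so the number of updates each opponent $j \neq i$ has performed by time $\tau_i(k)$ differs from $k$ by $o(k)$. Since each individual update changes $q_j$ by $O(1/k)$, the discrepancy between $q_j(\tau_i(k)-1)$ and its value at a perfectly synchronized step $k$ vanishes a.s. Consequently, $b_i(\tau_i(k)-1)$ can be interpreted as an $\epsilon_k$-best response, in the sense of \eqref{BR_epsilon_set}, against the synchronized opponent profile, with $\epsilon_k \to 0$ a.s. The resulting synchronized recursion then fits the GWFP framework of \cite{leslie2006generalised}, and combining this with Theorem \ref{theorem_classical_fp} (via the robustness property A.\ref{a_robustness}) yields $d(q(t), NE) \to 0$ a.s. The translation to strong convergence is then immediate: from \eqref{g_def},
\[
g_i(t) - q_i(t-1) = \rho_i(t)\,(b_i(t-1) - q_i(t-1)),
\]
so $\|g_i(t) - q_i(t-1)\| \leq 2\rho_i(t) \to 0$ by A.\ref{rho_a1}, and combining with $d(q(t-1),NE) \to 0$ gives $d(g(t),NE) \to 0$ a.s.

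The main obstacle will be making the ``asynchronous perturbation is harmless'' step fully rigorous: one must carefully quantify, in the metric used by the robustness statement, how the mismatch between each player's private BR clock and the opponent distributions $q_{-i}(\tau_i(k)-1)$ translates into an $\epsilon_k$-best-response error, and verify that this error is controllable in the sense required by the GWFP machinery. Assumption A.\ref{rho_a3} is the crucial input, since it prevents any single player from lagging or racing ahead in BR count and thereby distorting the effective step sizes and best-response targets seen by the others.
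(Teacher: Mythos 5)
Your proposal is correct and follows essentially the same route as the paper: the paper proves the corollary by verifying the hypotheses of the general Theorem \ref{theorem_general_result}, whose proof is exactly your argument in general form --- study the embedded process along each player's deliberate best-response rounds, use \textbf{A.\ref{rho_a2}}--\textbf{A.\ref{rho_a3}} (via Levi's extension of Borel--Cantelli, Lemma \ref{IR3}) to show the clock asynchrony induces only a vanishing $\epsilon_k$-best-response perturbation, invoke the robustness of FP to such perturbations from \cite{leslie2006generalised} (Corollary 5) to get weak convergence, and finish with $\|g_i(t)-q_i(t-1)\|\leq M_i\rho_i(t)\to 0$ by \textbf{A.\ref{rho_a1}}. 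The steps you flag as needing care (quantifying the asynchronous mismatch, and translating the embedded-clock convergence back to real time $t$) are precisely what the paper's Lemmas \ref{IR1}--\ref{IR0} and the $\varepsilon$--$S$--$T$ argument in the proof of Theorem \ref{theorem_general_result} supply.
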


In order to prove the above result, we first study a more general notion of fictitious play and then prove the result as a corollary of the general theorem (see Theorem \ref{theorem_general_result}). Taking this general approach allows our strong convergence results to be be applied to other FP-type algorithms, e.g., Generalized Weakened FP (Section \ref{sec_apps2}) and Empirical Centroid FP (Section \ref{sec_apps3}). The proof of Corollary \ref{theorem_main_result} is given in Section \ref{sec_apps1}.

\subsection{Simulation Example}
In order to demonstrate the learning properties of strongly convergent FP, we simulated classical FP and strongly convergent FP in a simple two-player matching pennies game with utility functions as shown in Figure \ref{fig:payoff_matrix}. The game has a unique (symmetric) mixed-strategy equilibrium in which both players choose either action with probability $1/2$. Figure \ref{fig:FP_strategies} shows the period-by-period strategies generated by classical FP. Players' strategies are always pure and progress in continuously lengthening cycles. While the time-averaged empirical distribution is being driven to equilibrium, the period-by-period strategies clearly are not.

Figure \ref{fig:strong_FP_strategies} shows the period-by-period strategies generated by strongly convergent FP with $\rho(t) = t^{-.35}$. Players' period-by-period strategies are converging to the unique Nash equilibrium of the game.
%Furthermore, note that in classical FP players' action choices are locked in a deterministic cycle, whereas in strongly convergent FP players begin to randomize independently.

Figure \ref{fig:received_utility} shows the utility received by the realized joint action $a(t)$ in each round of repeated play for both learning algorithms. The received payoffs in classical FP cycle around the value of the game, while the received payoffs in strongly convergent FP converge to the value of the game.

One possible tradeoff in strongly convergent FP is that less frequent deliberate best response actions and less frequent updating of the empirical distribution (see \eqref{qt_update2}) may lead to a slow-down in convergence rate. The empirical distribution processes for player $1$ in each algorithm is shown in Figure \ref{fig:empirical_dist} with $\rho(t) = t^{-.35}$.
{\small
\begin{figure}
        \centering
        \begin{subfigure}[b]{0.27\textwidth}
                \includegraphics[width=\textwidth]{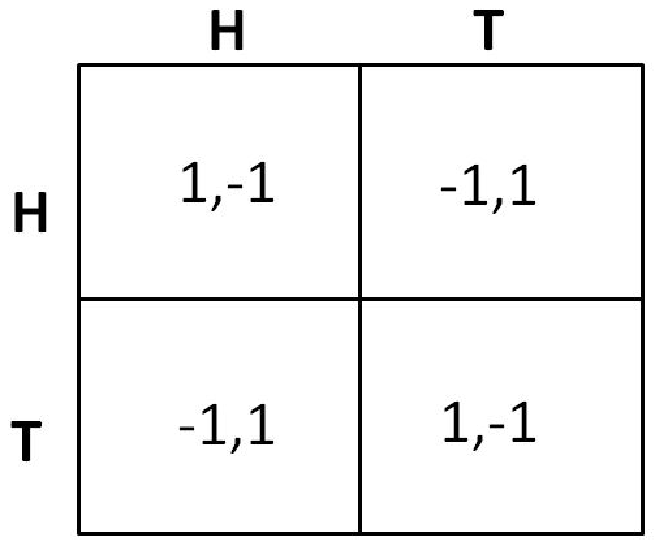}
                \caption{}
                \label{fig:payoff_matrix}
        \end{subfigure}%
        ~ %add desired spacing between images, e. g. ~, \quad, \qquad, \hfill etc.
          %(or a blank line to force the subfigure onto a new line)
        \begin{subfigure}[b]{0.3\textwidth}
                \includegraphics[width=\textwidth]{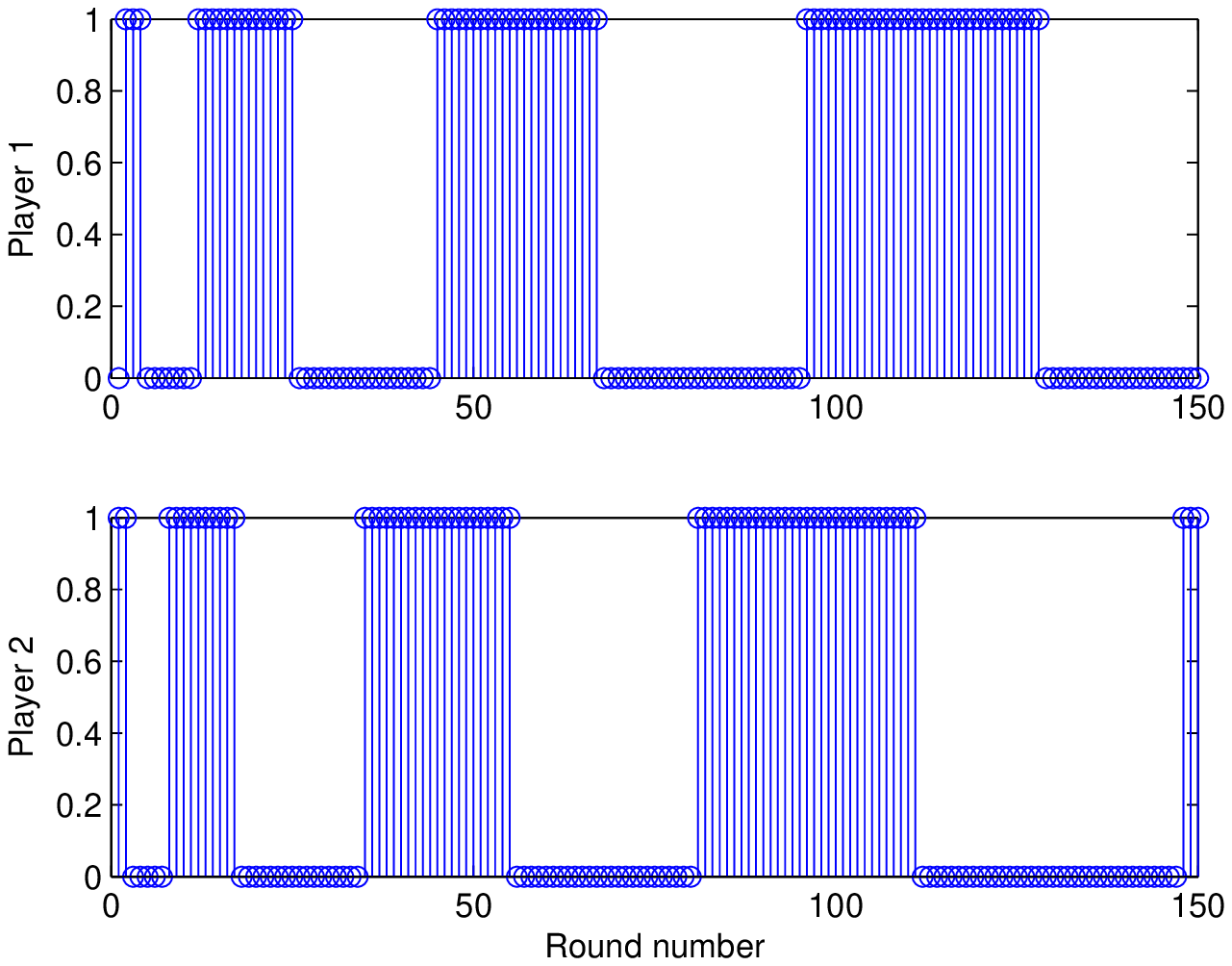}
                \caption{}
                \label{fig:FP_strategies}
        \end{subfigure}
        ~ %add desired spacing between images, e. g. ~, \quad, \qquad, \hfill etc.
          %(or a blank line to force the subfigure onto a new line)
        \begin{subfigure}[b]{0.3\textwidth}
                \includegraphics[width=\textwidth]{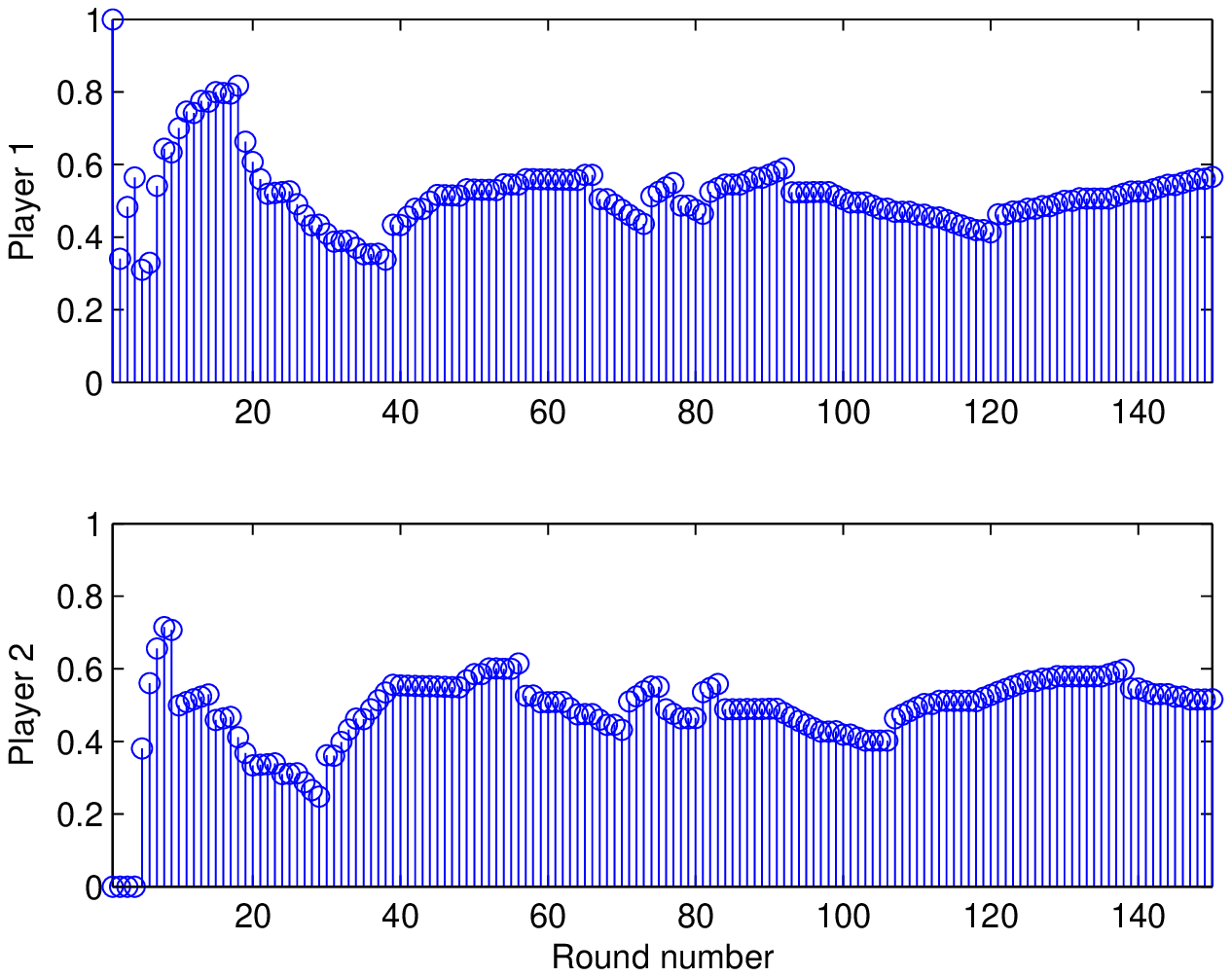}
                \caption{}
                \label{fig:strong_FP_strategies}
        \end{subfigure}
        \begin{subfigure}[b]{0.3\textwidth}
                \includegraphics[width=\textwidth]{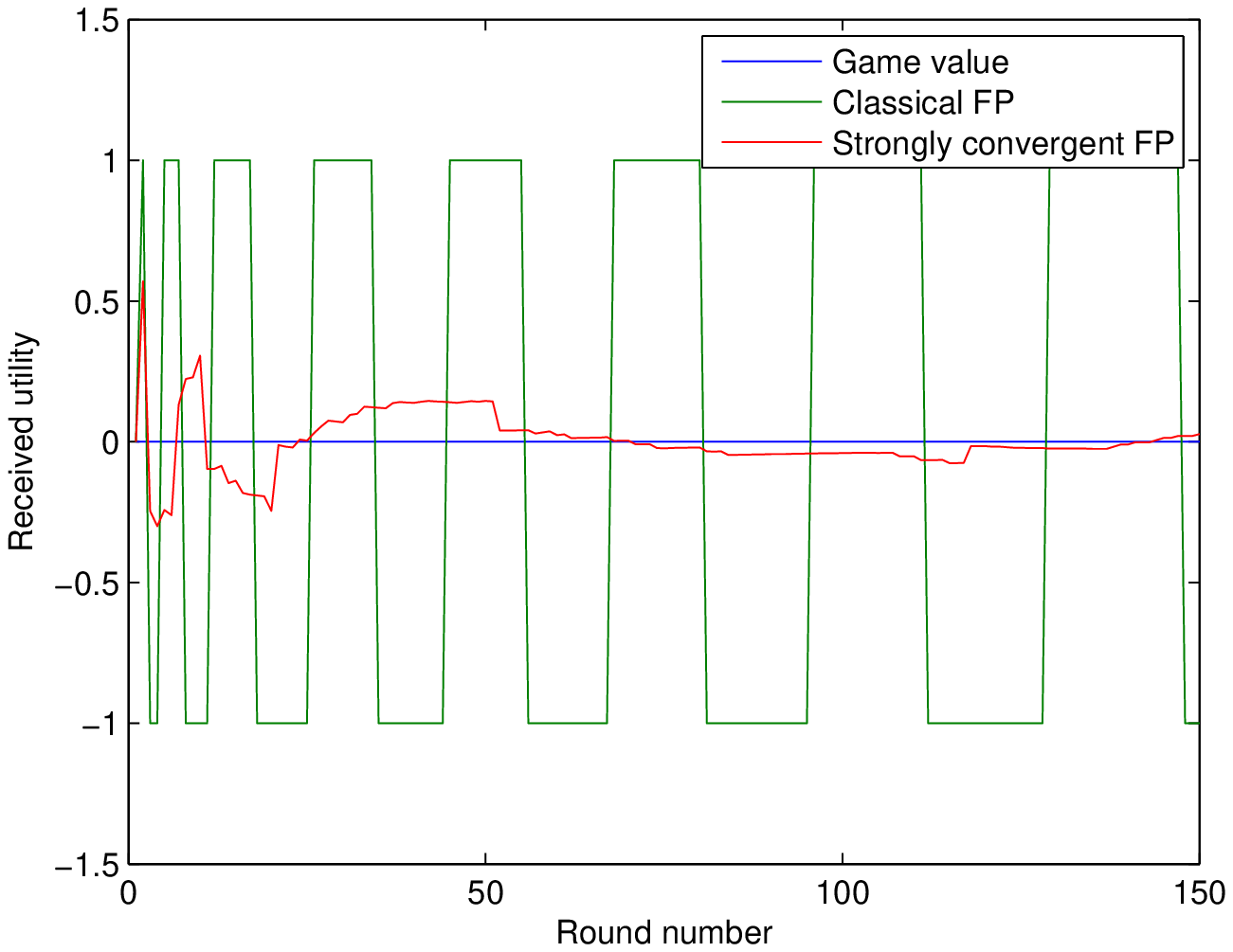}
                \caption{}
                \label{fig:received_utility}
        \end{subfigure}
        \begin{subfigure}[b]{0.3\textwidth}
                \includegraphics[width=\textwidth]{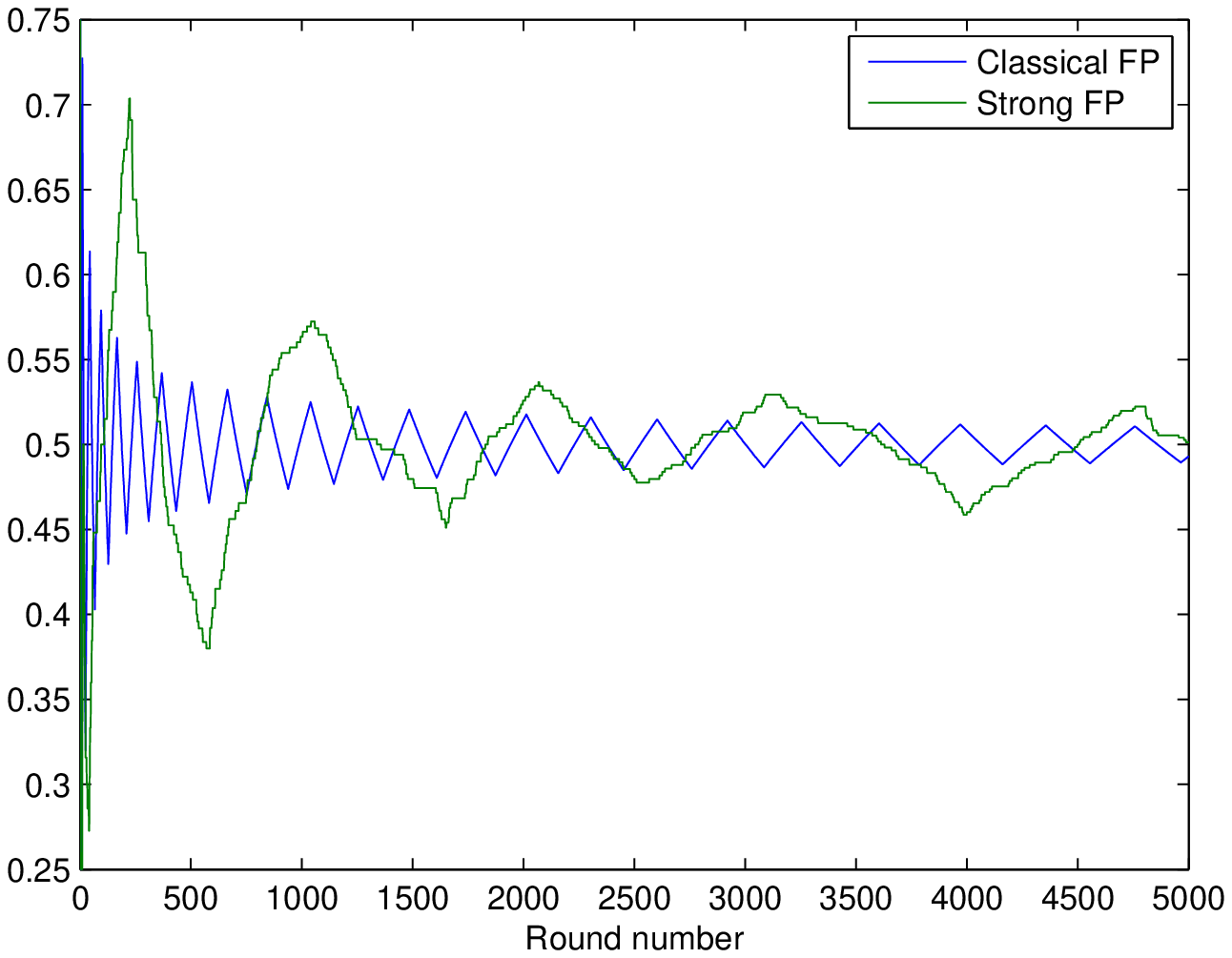}
                \caption{}
                \label{fig:empirical_dist}
        \end{subfigure}
        \caption{\small \ref{fig:payoff_matrix}: Matching pennies payoff matrix, \ref{fig:FP_strategies}: The probability of each player playing heads in round $t$ using the classical FP algorithm, \ref{fig:strong_FP_strategies}: The probability of each player playing heads in round $t$ using the strongly convergent FP algorithm, \ref{fig:received_utility}: The received utility in round $t$ given the realized action $a(t)$, \ref{fig:empirical_dist}: The empirical distribution process of the action H (heads) for player $1$ in both FP and strongly convergent FP.}\label{fig:simulation}
\end{figure}
}
\section{General Setup}
\label{sec_general_setup}
In this section we study strong learning in FP-type algorithms ---a class of algorithms that generalizes FP and includes many learning processes based on best-response dynamics.\footnote{The class of FP-type algorithms proposed here is similar in spirit to the class of best-response-based algorithms considered in \cite{jordan1993}.} In Section \ref{sec_FP_type}, we define the notion of an FP-type algorithm. In Section \ref{sec_FP_type_examples} we present some examples of an FP-type algorithm. In Section \ref{sec_FP_type_strong} we define the strongly convergent variant of an FP-type algorithm. In Section \ref{sec_FP_type_main_result} we provide the general strong convergence result for an FP-type algorithm (see Theorem \ref{theorem_general_result}), and in Sections \ref{sec_additional_defs}--\ref{sec_proof_general_result} we prove the general result.

\subsection{FP-Type Algorithm}
\label{sec_FP_type}
An FP-type algorithm generalizes classical FP in the following ways: (i) the notion of a player's empirical distribution is generalized, (ii) players are permitted to use a function of the empirical distribution (rather than use the empirical distribution itself) as a predictor of the next-round strategy of opponents, (iii) convergence to equilibrium may occur in terms of a function of the empirical distribution (rather than convergence to equilibrium of the empirical distribution itself), and (iv) limit sets other than the set of NE are permitted.

We define an FP-type algorithm as follows. Let players be engaged in repeated play of a stage game $\Gamma$. Let $a_i(t)$ represent the action of player $i$ in round $t\in\{1,~2,\ldots\}$, and let
$H_i(t) := \{a_i(s)\}_{s=1}^t$
represent the action history of player $i$ up to and including round $t$.

In classical FP, for each player $i$, the normalized histogram of the player's action choices \eqref{q_FP} is used as a compact representation of the player's action history. In the general formulation of an FP-type algorithm, we still suppose that players track a compact representation of the action history, but we allow the compact representation to take on a fairly general form,\footnote{In most literature, the notion of an \emph{empirical distribution} refers strictly to the time-averaged empirical histogram of a player's action choices, as in classical FP \eqref{q_FP}. However, as discussed in Section \ref{sec_prelims}, we use the term empirical distribution more generally to refer to an arbitrarily formed (see \textbf{A.\ref{a_general_q}}) distribution that a player uses to track information regarding opponents' empirical action histories. This abuse of terminology allows us to more naturally extend concepts to the general FP-type setting.}  as stated in the following assumption:
\begin{assumption}
\label{a_general_q}
The empirical distribution of player $i$ is of the form $q_i(t) := f^q_i(H_i(t),~t)$, where $f^q_i(\cdot,~t):\prod_{s=1}^t A_i \rightarrow \Delta_{i}$.
\end{assumption}
%In the context of an FP-type algorithm, we often refer to $q_i(t)$ as the \emph{generalized empirical distribution} of player $i$.
We make the following assumption regarding the sequence of functions $\{f^q_i(\cdot,~t)\}_{t\geq 1}$ used to form the empirical distribution sequence of player $i$:
\begin{assumption}
\label{a_step_size_bound}
For any history sequence $\{H_i(t)\}_{t\geq 1}$ for player $i$, there holds $\lim_{t\rightarrow\infty} \|f^q_i(H_i(t+1),~t+1) - f^q_i(H_i(t),~t)\| = 0$.
\end{assumption}

In particular, this implies that---regardless of the action history---there holds\\ $\lim_{t\rightarrow \infty} \|q_i(t+1) - q_i(t)\| = 0$ for each player $i$. This fairly mild assumption captures the essential characteristics required for our asymptotic analysis, and may be seen as a generalization of classical FP where exact averaging of actions over time yields $\|q_i(t+1) - q_i(t)\| \leq \frac{1}{t}$ (see Section \ref{sec_FP_example}). Together, assumptions \textbf{A.\ref{a_general_q}}--\textbf{A.\ref{a_step_size_bound}} allow us to consider a variety of FP inspired algorithms, including those with general step sizes \cite{leslie2006generalised} and those with more intricate history dependent rules such as derivative action \cite{Arslan04}.

In an FP-type algorithm, players form a prediction of the future behavior of opponents as a function of the current empirical distribution. Let $p_i(t)$ be player $i$'s prediction of opponent strategies for the upcoming round $(t+1)$. We assume,
\begin{assumption}
Player $i$'s prediction $p_i(t)$ of opponent behavior is of the form $p_i(t) = f_i^p(q(t))$, where $f_i^p:\Delta^n \rightarrow \Delta_{-i}$ is a Lipschitz continuous, time-invariant function.
\label{a_prediction}
\end{assumption}

We say a sequence of actions $\{a(t)\}_{t\geq 1}$ is an FP-type process if for all $i\in N$ and all $t\geq 1$,
$a_i(t+1) \in BR_i^{\epsilon_t}(p_i(t)),$
where $BR^{\varepsilon_t}_i(\cdot)$ is the $\varepsilon_t$-best response set (recall \eqref{BR_epsilon_set}), and $\{\epsilon_t\}_{t\geq 1}$ is a sequence satisfying $\lim_{t\rightarrow\infty} \epsilon_t = 0$.

In many variants of FP, including classical FP, learning occurs in the sense that $d(q(t),~NE)\rightarrow 0$. We generalize this notion of learning by allowing for limit sets other than the set of NE and allowing for convergence in terms of a function of $q(t)$ rather than permitting convergence only in terms of $q(t)$ itself.

Let $E$ be some target equilibrium set (not necessarily the set of NE). An FP-type process is said to learn elements of $E$ if for each $i$ there exists a function $f^\xi_i$ satisfying:
\begin{assumption}
The function $f^\xi_i:\Delta^n\rightarrow\Delta_i$ is Lipschitz continuous and time invariant,
\label{a_xi}
\end{assumption}
and such that, for $\xi_i(t) := f^\xi_i(q(t))$ and $\xi(t) := (\xi_1(t),\ldots,\xi_n(t))$ there holds
$\lim_{t\rightarrow 0} d(\xi(t),~E) =0.$
We refer to $\xi(t)$ as the asymptotic learning distribution, and $f^\xi_i$ as the convergence map of player $i$.

In general, we will denote an instance of an FP-type learning algorithm by $\Psi = (\{f^q_i(\cdot,~t)\}_{t\geq 1},f_i^p,f^\xi_i)_{i\in N}$.
In order to construct a strongly convergent variant of $\Psi$ we will require that $\Psi$ obtain weak convergence in sufficiently robust sense as stated in the following assumption.

\begin{assumption}
For the stage game $\Gamma$ and equilibrium set $E$, the FP-type algorithm $\Psi$ is such that for any sequence $(\epsilon_t)_{t\geq 1}$ satisfying $\lim_{t\rightarrow\infty} \epsilon_t = 0$, the FP-type algorithm $\Psi$ obtains weak convergence in the sense that $\lim_{t\rightarrow 0} d(\xi(t),~E)= 0$.
\label{a_robustness}
\end{assumption}

The above assumption ensures that the FP-type algorithm is robust to asymptotically decaying perturbations in a player's best response set. When studying the strongly convergent variant of $\Psi$ in the following section, the assumption \textbf{A.\ref{a_robustness}} will serve to ensure that convergence of the process is not disrupted by minor asynchronies in the number of deliberate best responses taken by each player (i.e., minor disparities in \eqref{ell_def}).

\subsection{Examples}
\label{sec_FP_type_examples}
\subsubsection{Classical Fictitious Play}
\label{sec_FP_example}
Classical FP (Section \ref{sec_FP_subsection}) fits the template of an FP-type algorithm with $q_i(t) = \frac{1}{t}\sum_{s=1}^t a_i(s)$. Note that $q_i(t)$ may be written in recursive form as: $q_i(t+1) = q_i(t) + 1/(t+1)\left(a_i(t+1) - q_i(t) \right)$. Thus, $\|q_i(t+1) - q_i(t)\| \leq \frac{M_i}{t+1}$, where $M_i := \sup_{p_i',p_i''\in \Delta_i} \|p_i' - p_i''\|$, and \textbf{A.\ref{a_step_size_bound}} is satisfied. The prediction map $f_i^p$ is given by the identity function, and convergence map $f^\xi_i$ also given by the identity function. The target equilibrium set is given by $E := NE$, the set of Nash equilibria.
%Construction of the strongly convergent variant of classical-FP is explicitly carried out in Section \ref{sec_strong_fp}.

\subsubsection{Generalized Weakened Fictitious Play}
\label{sec_GWFP_intro}
Leslie et al. \cite{leslie2006generalised} study a useful generalization of FP, termed Generalized Weakened FP (GWFP), in which players are permitted to choose a suboptimal best response each round, so long as the degree of suboptimality decays asymptotically to zero, and in which step-size sequences other than $\{1/t\}_{t\geq 1}$ are permitted.

Formally, for $p_{-i} \in \Delta_{-i}$ and $\epsilon\geq 0$, let\footnote{The set $\bar{BR^{\epsilon}_i}(p_{-i})$ defined below differs from the set $BR_i^\epsilon(p_{-i})$ defined in the preliminaries in that $\bar{BR^{\epsilon}_i}(p_{-i})$ includes all \emph{mixed} strategy best responses, whereas $BR_i^\epsilon(p_{-i})$ contains only the pure strategy best responses. The set $\bar{BR^{\epsilon}_i}(p_{-i})$ is used here in order to precisely define a GWFP process as given in \cite{leslie2006generalised}, but the remainder of the paper focuses on the set $BR_i^\epsilon(p_{-i})$.}
$\bar{BR^{\epsilon}_i}(p_{-i}) := \{p_i\in \Delta_i: U_i(p_i,p_{-i}) \geq \max_{\alpha_i \in A_i}U_i(\alpha_i,p_{-i})-\epsilon\},$
and for $p\in \Delta^n$, let $\bar{BR^{\epsilon}}(p) := (\bar{BR^{\epsilon}_1}(p_{-1}),\ldots,\bar{BR^{\epsilon}_n}(p_{-n}))$. A sequence $\{q(t)\}_{t\geq 1}$ is said to be a GWFP process if
$q(t+1) \in (1-\gamma(t+1))q(t) + \gamma(t+1)(\BRtildet(q(t)) + M_{t+1})$
with $\gamma(t) \rightarrow 0$ and $\epsilon_t \rightarrow 0$ as $t\rightarrow \infty$, $\sum_{t\geq 1} \gamma(t) = \infty$, and $\{M_t\}_{t\geq 1}$ is a deterministic (or stochastic) perturbation sequence satisfying
$\lim\limits_{t\rightarrow\infty}\sup_{k}\{\|\sum_{i=t}^{k-1} \gamma_{i+1} M_{i+1} \|:\sum_{i=t}^{k-1}\gamma_{i+1} \leq T\} = 0$ (a.s.).

We consider a special case of GWFP in which $M_t = 0, ~\forall t$ and the $\epsilon$-best response set is restricted to the set of pure strategy $\epsilon$-best responses. That is, we consider the subset of GWFP process such that
$a(t+1) \in BR^{\epsilon_t}(q_{-i}(t)),$
and,
\begin{equation}
\label{q_gwfp_def}
q(t+1) = q(t) + \gamma(t+1)\left(a(t+1) - q(t) \right),
\end{equation}
with $\epsilon_t \rightarrow 0$,
and in a slight variation of terminology we refer to the sequence of actions $\{a(t)\}_{t\geq 1}$ satisfying the above as a GWFP process.

In the terminology of Section \ref{sec_FP_type}, GWFP fits the template of an FP-type algorithm with the empirical distribution $q_i(t)$ defined recursively as in \eqref{q_gwfp_def} (where it is assumed that $\lim_{t\rightarrow\infty} \gamma(t) = 0$), the prediction map $f_i^p$ given by the identity function for all $i$, and the convergence map $f^{\xi}_i$ given by the identity function for all $i$, and the target equilibrium set is given by $E := NE$---the set of Nash equilibria.

\subsubsection{Empirical Centroid Fictitious Play---Learning Consensus Equilibria}
\label{sec_ECFP_intro}
Empirical Centroid FP (ECFP) was conceived as a variant of FP suited to implementation in large-scale games \cite{swenson2012ECFP,Swenson-MFP-Asilomar-2012}. In ECFP, rather than tracking the empirical distribution of each individual opponent (as in FP), players track and respond to only the centroid of the empirical distributions. In order to ensure the process is well defined the following assumption is made:
\begin{assumption}
\label{a_ident_strat}
All players use the same strategy space.
\end{assumption}
Under this assumption, let the empirical distribution be defined by
\vskip-20pt
\begin{equation}
\label{q_ecfp_def}
q_i(t) := \frac{1}{t}\sum_{s=1}^t a_i(s),
\end{equation}
\vskip-5pt
\noindent and let the empirical centroid distribution be defined by
$\bar q(t) := \frac{1}{n}\sum_{i\in N} q_i(t).$
%and let $\bar q^n(t) := (\bar q(t),\ldots,\bar q(t)) \in \Delta^n$ denote the $n$-tuple containing repeated copies of $\bar q(t)$.
We say a sequence of actions $\{a(t)\}_{t\geq 1}$ is an ECFP process if for all $i$ and all $t\geq 1$,
\begin{equation}
a_i(t+1) \in BR_i^{\epsilon_t}(\bar q_{-i}(t)),
\label{ecfp_process}
\end{equation}
where $\bar q_{-i}(t) = (\bar q(t),\ldots,\bar q(t)) \in \prod_{j\not= i} \Delta_j$ is the $(n-1)$-tuple containing $(n-1)$ repeated copies of $\bar q(t)$, and $\{\epsilon_t\}_{t\geq 1}$ is a sequence satisfying $\lim_{t\rightarrow\infty} \epsilon_t = 0$.

In ECFP, players learn elements of the set of consensus Nash equilibria\footnote{We assume here that the set of consensus Nash equilibria is non-empty. When revisiting ECFP in Section \ref{sec_apps3}, we provide an assumption on the utility structure that ensures that the set is indeed non-empty.}, defined by
$C:= \{p = (p_1,~\ldots~,p_n)\in NE:~ p_1 = p_2 = \ldots = p_n\},$
the subset of Nash equilibria in which all players use identical strategies (see \cite{swenson2012ECFP} for more details).
Define $\bar q^n(t) := (\bar q(t),\ldots,\bar q(t)) \in \Delta^n$ to be the $n$-tuple containing repeated copies of $\bar q(t)$; learning in ECFP takes place in the sense that $\lim_{t\rightarrow\infty} d(\bar q^n(t),~C) = 0$.

In the terminology of Section \ref{sec_FP_type}, ECFP fits the template of an FP-type algorithm with the empirical distribution given by \eqref{q_ecfp_def}, the prediction map $f_i^p$ given by
$f_i^p(q(t))  := \left(\frac{1}{n}\sum_{j\in N}q_j(t), \ldots, \frac{1}{n}\sum_{j\in N}q_j(t)\right),~ \forall i,$
where the right-hand side is a $(n-1)$-tuple containing repeated copies of $\bar q(t)$,
and the convergence map given by
$f^\xi_i(q(t)) := \frac{1}{n}\sum_{j=1}^n q_j(t),~\forall i.$
The target equilibrium set is given by $E:= C$, the set of consensus Nash equilibria.

\subsubsection{Empirical Centroid Fictitious Play---Learning Mean-Centric Equilibria}
\label{sec_ecfp_mce_intro}
In this section we consider a slight modification of the ECFP algorithm presented in Section \ref{sec_ECFP_intro} that enables players to learn elements of an alternate (non-Nash) equilibrium set.
%In particular, we consider a modification of the convergence maps $(f^\xi_i)_{i\in N}$ and the target equilibrium set $E$.

Let an ECFP action process be defined as in \eqref{ecfp_process}. Define the set of mean-centric equilibria by
$MCE := \{p \in \Delta^n: ~U_i(p_i,~\bar p_{-i}) \geq U_i(p_i',~\bar p_{-i})~\forall p_i' \in \Delta_i,~\forall i\}.$
The set of MCE is neither a superset nor a subset of the NE---rather, it is a set of natural equilibrium points tailored to the ECFP dynamics \cite{swenson2013MCE}. The set of consensus Nash equilibria $C$ (see Section \ref{sec_ECFP_intro}) however, is contained in the set of MCE.

In ECFP, players learn elements of MCE in the sense that $\lim_{t\rightarrow\infty}d(q(t),~MCE)= 0$. In the terminology of Section \ref{sec_FP_type}, this fits the template of an FP-type algorithm with $q_i(t)$ given by \eqref{q_ecfp_def}, $f_i^p$ defined in the same way as in Section \ref{sec_ECFP_intro}, the convergence map $f^\xi_i$ given by the identity for all $i$, and the target equilibrium set given by $E := MCE$.

Note that the only difference between the ECFP algorithm discussed in the Section \ref{sec_ECFP_intro} and the ECFP algorithm discussed here is the choice of target equilibrium set $E$ and convergence maps $f^\xi_i$.

\subsection{Strongly Convergent Variant of an FP-type Algorithm}
\label{sec_FP_type_strong}
In this section we construct the strongly convergent variant of an FP-type learning algorithm. The construction here is a generalization of that of Section \ref{sec_strong_FP_construction} where we constructed the strongly convergent variant of classical FP.

Let $\Psi = (\{f^q_i(\cdot,~t)\}_{t\geq 1},~f_i^p,~f^\xi_i)_{i\in N}$ be an FP-type learning algorithm.
For each $i\in N$, let $\{X_i(t)\}_{t\geq 1}$ be a sequence of random variables with $X_i(t) \in \{0,1\}$. Analogous to Section \ref{sec_strong_fp}, $X_i(t)=1$ will serve to indicate that player $i$ took a deliberate best response in round $t$. Let
\vskip-20pt
\begin{equation}
\label{ell_def2}
\ell_i(t) := \sum_{s=1}^t X_i(s)
\end{equation}
\vskip-5pt
\noindent count the number of deliberate best responses taken by player $i$ through $t$.
%Let
%\begin{equation}
%\label{tau_def}
%\tau_i(s) := \inf\{t:\ell_i(t)=s\}.
%\end{equation}
%For $s\geq 1$, $\tau_i(s)$ indicates the round when player $i$ took their $s$'th deliberate best response.
%and the sequence $\{\tau_i(s)\}_{s\geq 1}$ gives the subsequence of rounds when player $i$ took a deliberate best response.
%Note that by \eqref{tau_exist}, $\tau_i(s)$ is finite valued (a.s.) for any $s\in \{1,2,\ldots\}$.

In Section \ref{sec_strong_FP_construction} the empirical distribution of player $i$, \eqref{qt_update2}, is a time average taken only over rounds when player $i$ took a deliberate best response. In order to generalize this notion to an FP-type algorithm, define the term
\begin{equation}
\label{tau_def}
\tau_i(s) := \inf\{t:\ell_i(t)=s\}.
\end{equation}
For $s\geq 1$, $\tau_i(s)$ indicates the round when player $i$ took their $s$-th deliberate best response,\footnote{Note that by \eqref{tau_exist}, $\tau_i(s)$ is finite valued a.s. for any $s\in \{1,~2,\ldots\}$.}
and the sequence $\{\tau_i(s)\}_{s\geq 1}$ gives the subsequence of rounds when player $i$ took a deliberate best response. For $t\in \{1,~2,\ldots\}$ let
$\bar H_i(t) := \{a_i(\tau_i(s)):~\tau_i(s) \leq t\}$
denote the action history of player $i$. Note that $\bar H(t)$ records only the history of actions that were taken as deliberate best responses.
Let the empirical distribution of player $i$ at time $t$ be formed as
\vskip-18pt
\begin{equation}
\label{qt_general_strong_update}
q_i(t) := f_i^q(\bar H_i(t),~\ell_i(t)).
\end{equation}
\vskip-5pt
\noindent Let the asymptotic learning distribution (see \textbf{A.\ref{a_xi}} and subsequent discussion) be given by $\xi_i(t) := f_i^\xi(q(t))$ and  $\xi(t) := (\xi_1(t),\ldots,~\xi_i(t))$.

Let the action for player $i$ in round $t\geq 2$ be chosen according to the random rule\footnote{To initialize the process, let the action $a_i(1)$ be chosen arbitrarily, let $X_i(1) = 1$, and let $\bar H(1) = a_i(1)$ for all $i$.}
\begin{equation}
a_i(t) \sim g'_i(t) :=
\begin{cases}
b_i(t-1), & \mbox{ if } X_i(t) = 1,\\
\xi_i(t-1), & \mbox{otherwise},
\end{cases}
\label{action_rule1}
\end{equation}
where $p_i(t-1) = f_i^p(q(t-1))$, and $b_i(t-1) \in BR^{\eta_t}_i(p_i(t-1)),$
and assume:\footnote{Note that this assumption subsumes the more typical assumption that $\eta_t = 0,~\forall t$. By making this more general assumption we are able to handle interesting scenarios that may arise in a practical implementation of the algorithm; e.g., players have some asymptotically decaying error in their knowledge of their utility function or knowledge of opponent's empirical distributions.}
\begin{assumption}
\label{a_eta}
The sequence $(\eta_t)_{t\geq 1}$ associated with $b_i(t)$ of \eqref{action_rule1} is such that $\lim\limits_{t\rightarrow\infty} \eta_t = 0$.
\end{assumption}
Let $\mathcal{F}_t := \sigma(\{a(s),X_i(s),\ldots,X_n(s),b_1(s),\ldots,b_n(s)\}_{s\leq t}).$
Let the probability that player $i$ chooses a deliberate best response in round $t$ conditioned on past events be given by
$\rho_i(t) := \mathbb{P}(X_i(t) = 1\vert \mathcal{F}_{t-1}),$
and assume \textbf{A.\ref{rho_a1}}--\textbf{A.\ref{rho_a3}} hold.
Note that $q_i(t)$, $p_i(t)$, $\xi_i(t)$, and $g_i'(t)$ are $\mathcal{F}_t$--measurable and that by definition, $\rho_i(t)$ is $\mathcal{F}_{t-1}$--measurable.

Finally, let
\vskip-15pt
\begin{equation}
\label{g_def_general}
g_i(t) := b_i(t-1)\rho_i(t) + \xi_i(t)(1-\rho_i(t)).
\end{equation}
\vskip-5pt
\noindent Note that $g_i(t)$ is $\mathcal{F}_{t-1}$--measurable and that $g(\alpha_i,t) = \mathbb{P}(a_i(t) = \alpha_i\vert \mathcal{F}_{t-1})$; that is, $g_i(t)$ represents the mixed strategy in use by player $i$ in round $t$ (compare with \eqref{g_def}). Let $g(t) := (g_1(t),\ldots,g_n(t))$ denote the joint mixed strategy in use at time $t$.

We refer to a process where, for each player $i$, $q_i(t)$ is updated according to \eqref{qt_general_strong_update}, $a_i(t)$ is updated according to \eqref{action_rule1}, and $g_i(t)$ is updated according to \eqref{g_def_general} as the strongly convergent variant of $\Psi$ (for reasons to be clear soon---see Theorem \ref{theorem_general_result}). In Section \ref{sec_apps} we will demonstrate applications of this in the context of the previous examples.

\subsection{General Result}
\label{sec_FP_type_main_result}
The following theorem provides the general result from which the strong convergence of various FP-type algorithms can be derived.
\begin{theorem}
Let $\Gamma$ be a finite normal form game, let $E$ be an equilibrium set, and let $\Psi$ be an FP-type algorithm satisfying \textbf{A.\ref{a_general_q}}--\textbf{A.\ref{a_robustness}}. If the strongly convergent variant of $\Psi$ satisfies \textbf{A.\ref{rho_a1}}--\textbf{A.\ref{rho_a3}} and \textbf{A.\ref{a_eta}} then it achieves strong learning in the sense that $\lim_{t\rightarrow\infty} d(g(t),~E) = 0$, almost surely.
\label{theorem_general_result}
\end{theorem}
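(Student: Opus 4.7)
The plan is to split $d(g(t),E)\to 0$ into an easy ``smoothing'' piece and a harder ``re-indexed weak convergence'' piece. For the smoothing piece, note that the conditional distribution of $a_i(t)$ given $\mathcal{F}_{t-1}$ is
\[
g_i(t)=\rho_i(t)\,b_i(t-1)+(1-\rho_i(t))\,\xi_i(t-1),
\]
so that $\|g_i(t)-\xi_i(t-1)\|\leq \mathrm{diam}(\Delta_i)\,\rho_i(t)$, which vanishes by \textbf{A.\ref{rho_a1}}. Hence $\|g(t)-\xi(t-1)\|\to 0$ almost surely, and the theorem reduces to proving the weak-convergence statement $d(\xi(t),E)\to 0$.

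For the weak-convergence piece, I would re-index the process over each player's deliberate-best-response epochs and appeal to the robustness hypothesis \textbf{A.\ref{a_robustness}}. For $i\in N$ and $s\geq 1$ set
\[
\tilde q_i(s):=f_i^q\bigl(\{a_i(\tau_i(k))\}_{k=1}^{s},\,s\bigr),\qquad \tilde q(s):=(\tilde q_1(s),\ldots,\tilde q_n(s)),
\]
so that $q_i(t)=\tilde q_i(\ell_i(t))$. First, \textbf{A.\ref{rho_a2}} and the conditional Borel--Cantelli lemma applied to the Bernoulli sequence $\{X_i(t)\}$ give $\ell_i(t)\to\infty$ almost surely, so $\tau_i(s)$ is finite for every $s$. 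Whenever $X_i(t)=1$ the action is $a_i(t)=b_i(t-1)\in BR_i^{\eta_t}\bigl(f_i^p(q(t-1))\bigr)$; applied at $t=\tau_i(s)$, relating $f_i^p(q(\tau_i(s)-1))$ to $f_i^p(\tilde q(s-1))$ should exhibit $\{\tilde q(s)\}$ as an FP-type process on $\Gamma$ with some perturbation $\epsilon'_s\to 0$. Then \textbf{A.\ref{a_robustness}} yields $d(\tilde\xi(s),E)\to 0$ for $\tilde\xi(s):=f^\xi(\tilde q(s))$, and a final Lipschitz estimate via \textbf{A.\ref{a_xi}} transports the conclusion back to $\xi(t)=f^\xi(\tilde q_1(\ell_1(t)),\ldots,\tilde q_n(\ell_n(t)))$.

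The main obstacle is the synchronization step, namely bounding
\[
\bigl\|f_i^p(q(\tau_i(s)-1))-f_i^p(\tilde q(s-1))\bigr\|\leq L_p\sum_{j\in N}\bigl\|\tilde q_j(\ell_j(\tau_i(s)-1))-\tilde q_j(s-1)\bigr\|,
\]
with $L_p$ a Lipschitz constant of $f_i^p$. By \textbf{A.\ref{rho_a3}} the index gap $|\ell_j(\tau_i(s)-1)-(s-1)|$ is $o(s)$, while by \textbf{A.\ref{a_step_size_bound}} the per-step increments of $\tilde q_j$ vanish; these combine by telescoping to give the $o(1)$ bound and, together with \textbf{A.\ref{a_eta}}, package into $\epsilon'_s\to 0$. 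This is the delicate quantitative step that fills the gap in Lemma~2 of \cite{swenson2014strong}, and I expect it to absorb most of the technical work. The same telescoping idea, used once more, handles $\|\xi(t)-\tilde\xi(\ell_i(t))\|\to 0$ and closes the loop.
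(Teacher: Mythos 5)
Your overall architecture is the same as the paper's: bound $\|g_i(t)-\xi_i(t-1)\|\leq \mathrm{diam}(\Delta_i)\,\rho_i(t)$ and use \textbf{A.\ref{rho_a1}} to reduce the claim to $d(\xi(t),E)\to 0$; re-index along each player's deliberate best-response epochs; show the embedded process $\{\tilde q(s)\}$ is an FP-type process with vanishing perturbation; invoke \textbf{A.\ref{a_robustness}}; and transport the conclusion back to real time. The problem is the step you yourself flag as delicate. From \textbf{A.\ref{rho_a3}} and the conditional Borel--Cantelli estimate (Lemma \ref{IR3}) you correctly obtain $|\ell_j(\tau_i(s))-s|=o(s)$, but an $o(s)$ index gap telescoped against increments of $\tilde q_j$ that merely vanish (\textbf{A.\ref{a_step_size_bound}} supplies no rate and no link between the step sizes and the $\rho_i$'s) does not yield an $o(1)$ bound: the telescoped sum has $o(s)$ terms, each only bounded by $\sup_{u\geq s-o(s)}\gamma(u)$, and the product need not vanish. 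Concretely, with independent sampling $\rho_i(t)=t^{-r}$ (explicitly permitted in the paper) the gap $|\ell_j(\tau_i(s))-s|$ is typically of order $s^{1/2}$, while a GWFP-type step size $\gamma(u)=u^{-1/3}$ satisfies \textbf{A.\ref{a_step_size_bound}}; the telescoping bound is then of order $s^{1/2}\cdot s^{-1/3}=s^{1/6}\to\infty$. So your estimate does not close the synchronization step, and the same objection applies to the final step where you invoke ``the same telescoping idea'' to get $\|\xi(t)-\tilde\xi(\ell_i(t))\|\to 0$ (note also that $\xi(t)=f^{\xi}(q(t))$ mixes components $q_j(t)=\tilde q_j(\ell_j(t))$ sitting at \emph{different} epochs $\ell_j(t)$, so this step needs exactly the same synchronization control, not a single re-indexing by $\ell_i(t)$).

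For comparison, the paper does not stop at the $o(s)$ estimate: Lemma \ref{IR1} argues that the integer-valued gap $|\ell_j(\tau_i(s))-\ell_i(\tau_i(s))|$ itself tends to zero, and then uses the crude bound $\|\tilde q_j(s_1)-\tilde q_j(s_2)\|\leq B\,|s_1-s_2|$ from \eqref{qs_bound2}, so that it is the \emph{number} of telescoped terms, not just their size, that vanishes; Lemma \ref{IR0} then handles the off-by-one shift from $\tau_i(s+1)-1$ to $\tau_i(s)$. The passage back to real time is likewise not done by telescoping but by the freezing property \eqref{q_step_equality}: on rounds in which no player best-responds, $q(t)$ (hence $\xi(t)$) is unchanged, so every $\xi(t)$ with $t$ large coincides with $\xi(\tau_i(\ell_i(t)))$ for some $i$, to which Lemma \ref{IR1} and \textbf{A.\ref{a_xi}} apply. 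To repair your write-up you would need to establish a synchrony statement of the strength of Lemma \ref{IR1} (the gap itself, multiplied by the uniform step bound, tends to zero), rather than the $o(s)$ relative statement, which by itself is not sufficient.
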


We emphasize that in the above result players' period-by-period mixed strategies $g(t)$ are converging to equilibrium. In general,
when seeking to construct the strongly convergent variant of some FP-type algorithm $\Psi$, the most challenging aspect of applying Theorem \ref{theorem_general_result} is the verification that $\Psi$ satisfies \textbf{A.\ref{a_robustness}}. The remaining assumptions \textbf{A.\ref{a_general_q}}--\textbf{A.\ref{a_xi}} are generally fairly trivial to verify. Assumptions \textbf{A.\ref{rho_a1}}--\textbf{A.\ref{rho_a3}} and \textbf{A.\ref{a_eta}} pertain to the manner in which the strongly convergent variant of $\Psi$ is constructed and are not related to intrinsic properties of $\Psi$ itself.

\subsection{Some Additional Definitions}
\label{sec_additional_defs}
In order to prove Theorem \ref{theorem_general_result} we will study the behavior of an underlying FP-type process that is embedded in the action, history, and empirical distribution processes produced by the strongly convergent variant of $\Psi$. In particular, for $i\in N$ and $s\in \{1,2,\ldots\}$, let $\tau_i(s)$ be defined as in \eqref{tau_def}, and define the following terms:
$\tilde a_i(s) := a_i(\tau_i(s)),~
 \tilde a(s) := (\tilde a_1(s),\ldots,\tilde a_n(s)),~
 \tilde H_i(s) := \bar H_i(\tau_i(s)),~
 \tilde q_i(s) := q_i(\tau_i(s)),~
 \tilde q(s) := (\tilde q_1(s),\ldots,\tilde q_n(s)),~
 \tilde p_i(s) := f^p_i(\tilde q(s)),~
 \tilde \xi(s) := (f^\xi_1(\tilde q(s)),\ldots,f^\xi_n(\tilde q(s))).$
The aforementioned terms (marked with a tilde) correspond to to the embedded FP-type process that we will study in the proof of Theorem \ref{theorem_general_result}. In particular, for each player $i$, the sequence $\{\tau_i(s)\}_{s\geq 1}$ denotes the subsequence of rounds when the player chose to play a deliberate best response. The sequence ${\tilde a_i(s)}_{s\geq 1}$ is the action sequence occurring along the subsequence of rounds when player $i$ chose to play a deliberate best response. The sequence $\{\tilde H_i(s)\}_{s\geq 1}$ corresponds to the action history of player $i$ along the same subsequence. The sequence $\{\tilde q_i(s)\}_{s\geq 1}$ corresponds to the empirical distribution of player $i$ along the same subsequence; in particular, note that by Lemma \ref{q_tilde_lemma} (see appendix), $\{\tilde q_i(s)\}_{s\geq 1}$ fits the format prescribed by \textbf{A.\ref{a_general_q}} for the embedded FP-type process: $\tilde q_i(s) = f_i^q(\tilde H(s),s).$
Finally, the term $\tilde \xi(s)$ is the asymptotic learning distribution associated with the embedded FP-type process.

In studying the embedded FP-type process, it will be important to characterize the terms to which players are best responding. With this in mind, note that per \eqref{action_rule1}, the action at time $\tau_i(s+1)$ (in the strongly convergent variant of $\Psi$) is chosen as $a_i(\tau_i(s+1)) \in BR_i^{\eta_{\tau_i(s+1)}}(p_i(\tau_i(s+1)-1))$. In order to translate this to the embedded FP-type process, define the following terms:
$\hat q^i_j(s) := q_j(\tau_i(s+1)-1),~
\hat q^i(s) := (q_1(\tau_i(s+1)-1),\ldots,q_n(\tau_i(s+1)-1))~
\hat p_i(s) := f_i^p(\hat q^i(s)),$
By construction, the $(s+1)$-th action of player $i$ in the embedded FP-type process is chosen as,
\begin{equation}
\label{embedded_BR}
\tilde a_i(s+1) \in BR_i^{\eta_{\tau_i(s+1)}}(\hat p_i(s)).
\end{equation}
In the embedded FP-type process, the term $\tilde q_j(s)$ may be thought of as the `true' empirical distribution of player $j$. The term $\hat q_j^i(s)$ may be thought of as the estimate which player $i$ maintains of $\tilde q_j(s)$, and the term $\hat q^i(s)$ (note the superscript) may be thought of as player $i$'s estimate of the joint empirical distribution $\tilde q(s)$ at the time of player $i$'s $(s+1)$-th best response. Finally, the term $\hat p_i(s)$ may be thought of as player $i$'s prediction of opponents next-stage strategy given $\hat q^i(s)$; in particular, note that---in the embedded FP-type process---player $i$ chooses their stage $(s+1)$ action \eqref{embedded_BR} as an asymptotic best response to $\hat p_i(s)$.

\subsection{Some Useful Properties}
\label{sec_useful_props}
Let
\begin{equation}
\Omega' := \{\omega: \lim_{t\rightarrow\infty} \frac{\ell_i(t)}{\sum_{k=1}^t \rho_i(t)} = 1,~ \forall i \}.
\end{equation}
By Lemma \ref{IR3} (see appendix), there holds $\mathbb{P}(\Omega')=1$. In proving Theorem \ref{theorem_general_result} we will restrict attention to (sample path) realizations in $\Omega'$.

Note that under assumption \textbf{A.\ref{rho_a2}}, there holds $\{\omega: \lim_{t\rightarrow\infty}\ell_i(t) = \infty,~ \forall i\}\supset \Omega'.$ By the equivalence $\{\omega: \lim_{t\rightarrow\infty}\ell_i(t) = \infty,~ \forall i\}=\{\omega:X_i(t)=1 \mbox{ infinitely often } \forall i\}$, there holds $\{\omega:X_i(t)=1 \mbox{ infinitely often } \forall i\}\supset \Omega'.$
Therefore, by the definitions of $\ell_i$ and $\tau_i$, there holds for any realization in $\Omega'$, $\lim_{t\rightarrow\infty} \ell_i(t) = \infty$, and
\vskip-15pt
\begin{align}
\label{tau_exist}
&\tau_i(s) <\infty, ~\forall s\in \mathbb{N},\\
\label{tau_lim}
&\lim\limits_{s\rightarrow\infty} \tau_i(s) = \infty.
\end{align}
\vskip-5pt

These properties will be useful in the proof of Theorem \ref{theorem_main_result}. In particular, the proof will frequently make reference to $\tilde q_i(s)$, or $\tilde a_i(s)$ for arbitrary $s\in \mathbb{N}$---the property \eqref{tau_exist} ensures that such terms are well defined for any $\omega \in \Omega'$.

Note also that for any realization in $\Omega'$, for $i\in N$ and $s\in \{1,2,\ldots\}$,
\vskip-15pt
\begin{equation}
\label{ell_tau_eq}
\ell_i(\tau_i(s)) = s,
\end{equation}
\vskip-15pt
\noindent and for $i\in N$ and $t\in \{1,2,\ldots\}$
\vskip-15pt
\begin{equation}
\label{X_i_implication}
X_i(t) = 1 \implies \tau_i(\ell_i(t)) = t.
\end{equation}
\vskip-5pt
\noindent Furthermore, note that $X_i(t) = 0$ implies that $\ell_i(t) = \ell_i(t-1)$ and $\bar H_i(t) = \bar H_i(t-1)$, and in particular,
\vskip-20pt
\begin{align}
\label{q_step_equality}
X_i(t) = 0 \implies q_i(t) = q_i(t-1).
\end{align}
\vskip-10pt
\noindent These facts are readily verified by conferring with the definitions of $\tau_i$, $\ell_i$, and $X_i$.

\subsection{Proof of Theorem \ref{theorem_general_result}}
\label{sec_proof_general_result}
\begin{proof}
Since $\mathbb{P}(\Omega') = 1$ it is sufficient to show that the desired result holds for any $\omega \in \Omega'$. Henceforth, we restrict attention to realizations $\omega \in \Omega'$, and for ease of notation suppress the term $\omega$ when referring to random variables.

As a first step, we wish to show that $\lim_{s\rightarrow\infty} d(\tilde \xi(s),~E) = 0$. We accomplish this by showing that there exists a sequence $\{\epsilon_s\}_{s\geq 1}$ such that $\lim_{s\rightarrow\infty}\epsilon_s = 0$ and $\tilde a_i(s+1) \in BR_i^{\epsilon_s}(\tilde p_i(s))$. By assumption \textbf{A.\ref{a_robustness}}, it will then follow that $\lim_{s\rightarrow\infty} d(\tilde \xi(s),~E) = 0$.

To that end, note that by Lemma \ref{lemma_BR_limit} (see appendix),
$\lim\limits_{s\rightarrow\infty} |U_i(a_i(\tau_i(s+1)),p_i(\tau_i(s+1)-1)) - v_i(p_i(\tau_i(s+1)-1))| = 0,~\forall i,$
or equivalently by the definitions of $\tilde a(s)$ and $\hat p_i(s)$ (see Section \ref{sec_additional_defs}),
\vskip-20pt
\begin{align}
\lim\limits_{s\rightarrow\infty} |U_i(\tilde a_i(s+1)),\hat p_i(s)) - v_i(\hat p_i(s))| = 0,~\forall i.
\label{thrm1_eq1}
\end{align}
\vskip-5pt
\noindent By Lemma \ref{IR0} (see appendix), $\lim_{s\rightarrow\infty} \|\hat q^i(s) - \tilde q(s)\| = 0$. By \textbf{A.\ref{a_prediction}}, it follows that $\lim_{s\rightarrow\infty} \|\hat p_i(s) - \tilde p_i(s)\| = 0$, which by the Lipschitz continuity of $U_i(\cdot)$ implies that
$\lim_{s\rightarrow\infty} | U_i(\alpha_i,\hat p_i(s)) - U_i(\alpha_i,\tilde p_i(s))| = 0,~ \forall \alpha_i \in A_i, \forall i,$
and
$\lim_{s\rightarrow\infty} |v_i(\hat p_i(s)) - v_i(\tilde p_i(s))| = 0, \forall i.$
Returning to \eqref{thrm1_eq1} we see that
$\lim\limits_{s\rightarrow\infty} |U_i(\tilde a_i(s+1)),\tilde p_i(s)) - v_i(\tilde p_i(s))| = 0, ~\forall i,$
i.e., there exists a sequence $\{\epsilon_s\}_{s\geq 1}$ such that $\epsilon_s \rightarrow 0$ and $\tilde a_i(s+1) \in BR_i^{\epsilon_s}(\tilde p_i(s))$. It follows by \textbf{A.\ref{a_robustness}} that
\vskip-20pt
\begin{equation}
\lim\limits_{s\rightarrow\infty} d(\tilde \xi(s),~E) = 0.
\label{theorem_main_result_eq3}
\end{equation}
\vskip-5pt

We now proceed to show that $\lim_{t\rightarrow\infty} d(\xi(t),~E) =0$. Let $\varepsilon > 0$ be given. By Lemma \ref{IR1} (see appendix) and assumption \textbf{A.\ref{a_xi}}, for each $i\in N$, there exists a random time $S_i > 0$ such that $\forall s\geq S_i$, $\|\xi(\tau_i(s)) - \tilde\xi(s)\| < \frac{\varepsilon}{2}$. Let $S^{'} = \max_i\{S_i\}$. By \eqref{theorem_main_result_eq3} there exists a random time $S^{''}$ such that $\forall s\geq S^{''}$, $d(\tilde \xi(s), ~E) < \frac{\varepsilon}{2}$. Let $S=\max\{S^{'},S^{''}\}$. Then
\vskip-15pt
\begin{equation}
d(\xi(\tau_i(s)),~E) < \varepsilon, ~\forall i,~ \forall s\geq S.
\label{thrm1_eq6}
\end{equation}
\vskip-5pt

Let $T = \max_{i}\{\tau_i(S)\}$. Note that for some $i$, $\xi(T) = \xi(\tau_i(S))$, and therefore by \eqref{thrm1_eq6},
\vskip-15pt
\begin{equation}
d(\xi(T),~E) < \varepsilon.
\label{thrm1_eq4}
\end{equation}
\vskip-5pt
Also note that for any $t_0>T$, it holds that $\ell_i(t_0) \geq S$ (since $\ell_i(\tau_i(S)) = S$, and $\ell_i(t)$ is non-decreasing in $t$), and moreover
\vskip-15pt
\begin{align}
X_i(t_0) = 1 \mbox{ for some } i & ~\implies ~q(t_0) = q(\tau_i(\ell_i(t_0))) \implies \xi(t_0) = \xi(\tau_i(\ell_i(t_0))),\\
X_i(t_0) = 0 \mbox{ for all $i$ } & ~\implies ~q(t_0) = q(t_0-1) \implies \xi(t_0) = \xi(t_0-1) ,
\label{thrm1_eq5}
\end{align}
\vskip-5pt
where the first implication holds with $ ~\mbox{ with } \ell_i(t_0) \geq S$. In the above, the first line follows from \eqref{X_i_implication}, and the second line follows from \eqref{q_step_equality}.
Consider $t\geq T$. If for some $i$, $X_i(t) = 1$, then by \eqref{thrm1_eq5} and \eqref{thrm1_eq6},
$d(\xi(t),~E) = d(\xi(\tau_i(\ell_i(t))),~E) < \varepsilon.$
Otherwise, if $X_i(t) = 0 ~\forall i$, then $\xi(t) = \xi(t-1)$.

Iterate this argument $m$ times until either (i) $X_i(t-m) = 1$ for some $i$, or (ii), $t-m = T$. In the case of (i),
$d(\xi(t),~E) = d(\xi(t-m),~E) = d(\xi(\tau_i(\ell_i(t-m))),~E) < \varepsilon,$
where the inequality again follows from \eqref{thrm1_eq6} and the fact that $t-m>T \implies \ell_i(t-m)\geq S$.
In the case of (ii),
$d(\xi(t),~E) = d(\xi(T),~E) < \varepsilon,$
where the inequality  follows from \eqref{thrm1_eq4}. Since $\varepsilon >0$ was chosen arbitrarily, it follows that
$\lim\limits_{t\rightarrow\infty} d(\xi(t),~E)=0.$

Finally, we show that $\lim_{t\rightarrow\infty} d(g(t),~E)=0$. Note that by \eqref{g_def_general}, $\|g_i(t) - \xi_i(t-1)\| \leq M_i\rho_i(t), ~\forall i,$ where $M_i:=\max_{p',p'' \in \Delta_i} \|p' - p''\|$ is a constant. Invoking assumption \textbf{A.\ref{rho_a1}} gives, $\lim\limits_{t\rightarrow\infty}\|g_i(t) - \xi_i(t-1)\|=0, ~\forall i.$
Combining this with the fact that $\lim\limits_{t\rightarrow\infty} d(\xi(t),~E)=0$ yields the desired result, $\lim_{t\rightarrow\infty}d(g(t),~E) =0$.
\end{proof}

\section{Applications of the General Result}
\label{sec_apps}
In this section we consider three different FP-type algorithms and study the strongly convergent variant of each. In each case, we prove strong convergence by showing that the FP-type algorithm fits the template of Theorem \ref{theorem_general_result}. Generally, the only non-trivial aspect of applying Theorem \ref{theorem_general_result} will be to show that \textbf{A.\ref{a_robustness}} is satisfied.

In Section \ref{sec_apps1} we consider classical FP. The fact that classical FP satisfies \textbf{A.\ref{a_robustness}} was shown by Leslie et al. \cite{leslie2006generalised}. In Section \ref{sec_apps2} we consider GWFP---a generalization of FP proposed in \cite{leslie2006generalised}. Again, the crucial step of showing that GWFP satisfies \textbf{A.\ref{a_robustness}} was shown in \cite{leslie2006generalised}. In Section \ref{sec_apps3} we consider a variant of FP termed ECFP. That ECFP satisfies \textbf{A.\ref{a_robustness}} was shown in \cite{swenson2015weakECFP}. We emphasize that each of these algorithms is known to achieve weak learning in the sense that $d(\xi(t),~E) \rightarrow 0$ as $t\rightarrow \infty$. Our contribution is to construct a variant where players also achieve learning in the strong sense that period-by-period mixed strategies also converge to equilibrium.

\subsection{Strong Convergence in Classical FP}
\label{sec_apps1}
We now prove Corollary \ref{theorem_main_result} using the general convergence result of Theorem \ref{theorem_general_result}.

\begin{proof}
Classical FP fits the template of an FP-type algorithm with the empirical distribution given by $q_i(t) = \frac{1}{t}\sum_{s=1}^ta_i(s)$, the functions $f_i^p$ and $f_i^\xi$ given by the identity function for each $i$, and the best response perturbation given by $\epsilon_t = 0,~\forall t$. To show that the strongly convergent variant of classical FP attains strong learning, it suffices to show that the assumptions of Theorem \ref{theorem_general_result} are met.

To that end, note that \textbf{A.\ref{rho_a1}}--\textbf{A.\ref{rho_a3}} are satisfied by assumption, and \textbf{A.\ref{a_eta}} is trivially satisfied (with $\eta_t = 0,~\forall t$). Furthermore, the empirical distribution sequence satisfies $\lim_{t\rightarrow\infty}\|q_i(t) - q_i(t-1)\| = 0$ (see Section \ref{sec_FP_example}), and hence \textbf{A.\ref{a_step_size_bound}} is satisfied. The functions $f_i^p$ and $f_i^\xi$ (each being the identity function) satisfy \textbf{A.\ref{a_prediction}}--\textbf{A.\ref{a_xi}}.
Therefore, it is sufficient to show that \textbf{A.\ref{a_robustness}} is satisfied. But, for zero-sum games, potential games, and generic $2\times m$ games this holds by \cite{leslie2006generalised}, Corollary 5.
\end{proof}

\subsection{Strong Convergence in Generalized Weakened FP}
\label{sec_apps2}
%Recall that GWFP is an FP-type process with the step size sequence $\gamma(t)$ being an arbitrary sequence satisfying $\lim_{t\rightarrow\infty} \gamma(t) = 0$ and $\sum_{t\geq 1} \gamma(t) = \infty$, the prediction function $f_i^p$ given by the identity function for all $i$, the convergence map $h^{\xi}_i$ given by the identity function for all $i$, and the target equilibrium set given by $E := NE$---the set of Nash equilibria (see Section \ref{sec_GWFP_intro}).

GWFP was introduced in Section \ref{sec_GWFP_intro}, where it was shown to fit the template of an FP-type algorithm.

Since, by definition, a GWFP process allows players to choose an $\epsilon_t$ sub-optimal best response with $\epsilon_t \rightarrow 0$, the following result (\cite{leslie2006generalised}, Corollary 5) guarantees a GWFP process satisfies \textbf{A.\ref{a_robustness}} in the noted classes of games.
\begin{theorem}
Any generalized weakened fictitious play process will converge to the set of Nash equilibria in two-player zero-sum games, potential games, and generic $2\times m$ games.
\label{thrm_wfp}
\end{theorem}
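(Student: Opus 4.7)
The plan is to realize a GWFP process as a discrete-time stochastic approximation of the best-response differential inclusion
\[
\dot{q}(s) \in \mathrm{BR}(q(s)) - q(s),
\]
and then invoke the general convergence theory for such inclusions. To that end, first rewrite the defining recursion \eqref{q_gwfp_def} in the canonical Robbins--Monro form
\[
q(t+1) - q(t) = \gamma(t+1)\bigl(a(t+1) - q(t)\bigr),
\]
with $a(t+1) \in \mathrm{BR}^{\epsilon_t}(q(t))$ and step sizes satisfying $\gamma(t)\to 0$, $\sum_t \gamma(t) = \infty$. Following Bena\"im--Hofbauer--Sorin, the continuous-time linear interpolation of $\{q(t)\}$ on the time-scale $\Theta(t) := \sum_{s\leq t}\gamma(s)$ is an asymptotic pseudo-trajectory (APT) of the differential inclusion above, provided the driving perturbations are negligible on compact time windows.

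The first substantive step is to verify this negligibility in the presence of the $\epsilon_t$-slack and a possible noise term $M_t$. Upper semicontinuity of the best-response correspondence together with $\epsilon_t \to 0$ implies that $\mathrm{BR}^{\epsilon_t}(q) \subseteq \mathrm{BR}(q) + B_{\delta(t)}(0)$ for a deterministic null sequence $\delta(t)$, so the $\epsilon_t$-slack contributes an asymptotically vanishing perturbation on any finite $\Theta$-window; the noise condition $\lim_t \sup_k \bigl\|\sum_{i=t}^{k-1}\gamma_{i+1}M_{i+1}\bigr\| = 0$ on windows of $\Theta$-length $\leq T$ is precisely the APT hypothesis for the stochastic term. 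Combining these yields that $q(\cdot)$ is (a.s.) an APT of the BR inclusion.

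The second step is to invoke the Bena\"im--Hofbauer--Sorin theorem: the limit set of any APT is internally chain transitive (ICT) for the inclusion. It therefore suffices to show that, in each of the three game classes, every ICT set of the BR dynamics is contained in $NE$. For two-player zero-sum games this follows from the fact that the value-based Lyapunov function $V(q) := \max_i v_i(q_{-i}) - \min_i(\cdot)$ strictly decreases along BR trajectories off $NE$, so the only ICT sets lie in $NE$. For potential games, the potential $\phi$ is a Lyapunov function for BR dynamics (its mixed extension strictly increases off $NE$), so the same conclusion holds via Bena\"im's Lyapunov criterion for chain-transitivity. For generic $2\times m$ games, one appeals to Berger's analysis of the BR dynamics showing that all $\omega$-limit sets (and hence ICT sets) of the two-dimensional BR flow are contained in $NE$.

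The main obstacle is clean handling of the $\epsilon_t$-perturbation and the non-uniqueness in the best-response set: one must show that for any measurable selection of $a(t+1) \in \mathrm{BR}^{\epsilon_t}(q(t))$, the resulting perturbation remains APT-admissible uniformly. This is resolved by invoking upper semicontinuity of $\mathrm{BR}^{\epsilon}$ jointly in $(\epsilon,q)$ and partitioning $[0,T]$ into $\gamma$-weighted subintervals where the accumulated deviation from an exact BR selection is $O\bigl(T \cdot \sup_{s\in [t,t+T/\gamma]} \delta(s)\bigr) \to 0$. Once this is in place, the three Lyapunov/ICT arguments above conclude the proof.
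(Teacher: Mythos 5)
The paper itself contains no proof of this statement: Theorem \ref{thrm_wfp} is quoted verbatim as Corollary 5 of Leslie and Collins \cite{leslie2006generalised}, and the surrounding text only records its consequence that $\lim_{t\to\infty} d(q(t),NE)=0$, i.e.\ that GWFP satisfies the robustness assumption \textbf{A.\ref{a_robustness}}. So your plan is not an alternative to anything proved here; what it does is reconstruct, in outline, the strategy of the cited source: realize a GWFP process as a perturbed solution (asymptotic pseudo-trajectory) of the best-response differential inclusion $\dot q \in \mathrm{BR}(q)-q$ in the sense of Bena\"im--Hofbauer--Sorin, conclude that its limit set is internally chain transitive, and then show ICT sets lie in $NE$ via Lyapunov arguments (duality gap for two-player zero-sum games, the potential for potential games) and the known analysis of the $2\times m$ case. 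That is indeed the right route, and at the level of a plan it matches the proof you would find in \cite{leslie2006generalised}.

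Two steps, however, would fail as written. First, the uniform pointwise inclusion $\mathrm{BR}^{\epsilon_t}(q)\subseteq \mathrm{BR}(q)+B_{\delta(t)}(0)$ is false: at any $q$ where a player's top two actions differ in payoff by less than $\epsilon_t$, an $\epsilon_t$-best reply can be at distance of order one from $\mathrm{BR}(q)$ no matter how small $\epsilon_t$ is. The correct and sufficient statement is graph proximity: by compactness and continuity of $U_i$ and $v_i$, for every $\delta>0$ there is $\epsilon>0$ such that any $\epsilon$-best reply to $q$ lies within $\delta$ of $\mathrm{BR}(q')$ for some $q'$ with $\|q'-q\|<\delta$; this is exactly what the Bena\"im--Hofbauer--Sorin notion of a perturbed solution asks for, so the APT step goes through once rephrased this way. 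Second, for generic $2\times m$ games the inference ``all $\omega$-limit sets of the BR flow lie in $NE$, hence all ICT sets do'' is not valid: internally chain transitive sets can be strictly larger than $\omega$-limit sets (a flow on a circle with a single rest point is the standard example---every $\omega$-limit set is the rest point, yet the whole circle is ICT), so this case must be closed with an argument pitched at the ICT level, as in the cited literature, not with $\omega$-limit information alone. The zero-sum Lyapunov function you wrote is also garbled, though the intended duality-gap functional does the job.
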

%The above theorem is restated verbatim as it appears in \cite{leslie2006generalised}.

To clarify the precise meaning of the convergence stated above as it relates to the present work, we emphasize that Theorem \ref{thrm_wfp} implies that $\lim_{t\rightarrow\infty} d(q(t),~NE)=0$; i.e., the process converges weakly to equilibrium.

Let the strongly convergent variant of GWFP be constructed using the approach laid out in Section \ref{sec_FP_type_strong}. The following Corollary to Theorem \ref{theorem_general_result} states that the strongly convergent variant of a GWFP process will achieve strong learning.\footnote{It should be noted that classical FP may be seen as an instance of GWFP, and thus Corollary \ref{theorem_main_result} may in fact be deduced as a corollary to Corollary \ref{cor_gwfp}. However, for clarity and continuity of presentation, the results regarding classical FP have been presented separately.}
\begin{cor}
\label{cor_gwfp}
Let $\Gamma$ be a two-player zero-sum game, potential game, or generic $2\times m$ game. Let $\Psi$ be an instance of GWFP. If the strongly convergent variant of $\Psi$ satisfies \textbf{A.\ref{rho_a1}}--\textbf{A\ref{rho_a3}} and \textbf{A.\ref{a_eta}}, then it achieves strong learning in the sense that $\lim_{t\rightarrow\infty} d(g(t),~NE) = 0$.
\end{cor}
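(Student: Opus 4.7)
The plan is to verify that the hypotheses of Theorem \ref{theorem_general_result} all hold for a GWFP process $\Psi$, and then invoke that theorem directly. This parallels the proof of Corollary \ref{theorem_main_result}, with the only substantive change occurring in the verification of the step-size condition \textbf{A.\ref{a_step_size_bound}} and the robustness condition \textbf{A.\ref{a_robustness}}.

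First I would recall from Section \ref{sec_GWFP_intro} that a GWFP process fits the template of an FP-type algorithm with $q_i(t)$ defined recursively by $q_i(t+1) = q_i(t) + \gamma(t+1)(a_i(t+1) - q_i(t))$, with prediction map $f_i^p$ and convergence map $f_i^\xi$ both equal to the identity, and target equilibrium set $E = NE$. The assumptions \textbf{A.\ref{rho_a1}}--\textbf{A.\ref{rho_a3}} and \textbf{A.\ref{a_eta}} hold by hypothesis. Assumption \textbf{A.\ref{a_general_q}} is immediate from the recursive definition of $q_i(t)$, and assumptions \textbf{A.\ref{a_prediction}}--\textbf{A.\ref{a_xi}} hold trivially since both $f_i^p$ and $f_i^\xi$ are identity maps (hence Lipschitz and time-invariant). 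For \textbf{A.\ref{a_step_size_bound}}, observe that $\|q_i(t+1) - q_i(t)\| \leq \gamma(t+1) M_i$ where $M_i := \sup_{p',p'' \in \Delta_i} \|p' - p''\|$, and since a GWFP process requires $\gamma(t) \to 0$, the step-size condition follows.

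The main step is the verification of the robustness assumption \textbf{A.\ref{a_robustness}}. Here the key observation, which does the real work, is that GWFP is robust to asymptotically decaying best-response perturbations essentially by definition: if $\{a(t)\}_{t \geq 1}$ is a GWFP process driven by a perturbation sequence $\{\epsilon_t\}_{t \geq 1}$ with $\epsilon_t \to 0$, and we additionally perturb each player's best response set by an asymptotically vanishing amount $\{\epsilon'_t\}_{t \geq 1}$ with $\epsilon'_t \to 0$, then the resulting action sequence still satisfies $a_i(t+1) \in BR_i^{\epsilon_t + \epsilon'_t}(q_{-i}(t))$ with $\epsilon_t + \epsilon'_t \to 0$, and thus is itself a GWFP process. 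Theorem \ref{thrm_wfp} then guarantees that $d(q(t), NE) \to 0$ for this perturbed process in each of the stated classes of games. Since $f_i^\xi$ is the identity, this is precisely the statement of \textbf{A.\ref{a_robustness}}.

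With all assumptions of Theorem \ref{theorem_general_result} verified, the theorem yields $\lim_{t \to \infty} d(g(t), NE) = 0$ almost surely, which is the desired strong convergence conclusion. I expect no substantive obstacle in this argument, since the heavy lifting has already been done: the underlying weak convergence result is Theorem \ref{thrm_wfp} of \cite{leslie2006generalised}, and the passage from weak to strong convergence is exactly the content of the general Theorem \ref{theorem_general_result}; the role of this corollary is simply to align definitions and check box-by-box that GWFP is amenable to that general result.
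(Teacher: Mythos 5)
Your proposal is correct and follows essentially the same route as the paper: check \textbf{A.\ref{rho_a1}}--\textbf{A.\ref{rho_a3}} and \textbf{A.\ref{a_eta}} by hypothesis, get \textbf{A.\ref{a_step_size_bound}} from $\gamma(t)\to 0$, note \textbf{A.\ref{a_prediction}} and \textbf{A.\ref{a_xi}} hold trivially for identity maps, obtain \textbf{A.\ref{a_robustness}} from Theorem \ref{thrm_wfp} (using that additional vanishing perturbations keep the process a GWFP process by definition), and invoke Theorem \ref{theorem_general_result}. The paper's proof is identical in substance, merely stating the robustness step more tersely.
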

\begin{proof}
It is sufficient to show that the conditions of Theorem \ref{theorem_general_result} are met. Note that \textbf{A.\ref{rho_a1}}--\textbf{A.\ref{rho_a3}}, \textbf{A.\ref{a_eta}} hold by assumption. Furthermore, by definition, any GWFP process satisfies $\lim_{t\rightarrow\infty} \gamma(t)=0$, and hence satisfies \textbf{A.\ref{a_step_size_bound}}. The functions $f_i^p$ and $f_i^\xi$ are given by the identity function for each $i$, and hence \textbf{A.\ref{a_prediction}} and \textbf{A.\ref{a_xi}} hold.  Thus, it suffices to show that \textbf{A.\ref{a_robustness}} holds for the specified class of games---but, this follows from Theorem \ref{thrm_wfp}.
\end{proof}

\subsection{Strong Convergence in Empirical Centroid FP}
\label{sec_apps3}
ECFP was introduced in Sections \ref{sec_ECFP_intro} and \ref{sec_ecfp_mce_intro}. It
%In Section \ref{sec_ECFP_intro} it was shown that ECFP fits the format of an FP-type algorithm with the convergence map $f_i^\xi$ given as in \eqref{ecfp_h_xi_def} and the target equilibrium set given by the set of consensus Nash equilbria $C$. In Section \ref{sec_ecfp_mce_intro} it was shown that an alternate form of ECFP---in which the convergence map $f_i^\xi$ is given by the identity function for all $i$ and the target equilibrium set is given by the set of mean-centric equilibria $MCE$---also fits the format of of an FP-type algorithm.
In order to study the asymptotic behavior of ECFP (in either of the above formats introduced in Sections \ref{sec_ECFP_intro} and \ref{sec_ecfp_mce_intro}) we make the following assumption regarding the structure of players' utility functions:
\begin{assumption}
The players' utility functions are identical and permutation invariant. That is, for any $i,j\in N$, $u_i(y) = u_j(y)$, and $u([y']_i,[y'']_j,y_{-(i,j)}) = u([y'']_i,[y']_j,y_{-(i,j)}),$
where, for any player $k\in N$, the notation $[y']_k$ indicates the action $y'\in Y_k$ being played by player $k$, and $y_{-(i,j)}$ denotes the set of actions being played by all players other than $i$ and $j$.
\label{a_perm_inv}
\end{assumption}

%In Section \ref{sec_ecfp_mce_intro} it was shown that an alternate form of ECFP---in which, the convergence map $f_i^\xi$ given by the identity function for all $i$ and the target equilibrium set is given by the set of mean-centric equilibria $MCE$---also fits the format of of an FP-type algorithm. In order to study the asymptotic behavior of ECFP with this form of convergence map, we make the following assumption regarding the structure of players' utility functions:
%\begin{assumption}
%The players' utility functions can be decomposed as $u_i(y) = f_i(y_i) + \phi(y)$ where $f_i(y_i)$ depends only on the action of player $i$ and $\phi(y)$ is a permutation-invariant function, identical for all players.
%\label{a_MCE_util}
%\end{assumption}
We note that, under this assumption, the sets $C$ and $MCE$ are nonempty \cite{swenson2012ECFP,swenson2013MCE}.
The following theorem (\cite{swenson2015weakECFP}, Theorem 1) specifies the manner in which players engaged in an ECFP process (weakly) learn elements of the sets $C$ and $MCE$.
\begin{theorem}
Let $\{a(t)\}_{t\geq 1}$ be an ECFP process.  \\
Assume $\Gamma$ is such that \textbf{A.\ref{a_ident_strat}} and \textbf{A.\ref{a_perm_inv}} hold. Then players learn equilibrium strategies in the sense that
(i) $\lim_{t\rightarrow \infty} d(\bar q^n(t),~C) = 0$, and (ii) $\lim_{t\rightarrow \infty} d(q(t),~MCE) = 0$.
%(i) Assume $\Gamma$ is such that \textbf{A.\ref{a_ident_strat}}--\textbf{A.\ref{a_perm_inv}} hold. Then $\lim_{t\rightarrow \infty} d(\bar q^n(t),C) = 0$.\\
%(ii) Assume $\Gamma$ is such that \textbf{A.\ref{a_ident_strat}} and \textbf{A.\ref{a_MCE_util}} hold. Then $\lim_{t\rightarrow \infty} d(q(t),MCE) = 0$.
\label{theorem_ecfp_weak}
\end{theorem}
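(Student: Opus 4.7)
The plan is to recast the ECFP iteration as a Robbins-Monro-type stochastic approximation with set-valued drift, then invoke the machinery of Benaïm--Hofbauer--Sorin (together with the GWFP tools already cited in the paper) to reduce the asymptotics of the discrete process to the limit sets of a mean-field differential inclusion. First I would write the empirical distribution update as
\begin{equation*}
q_i(t+1) - q_i(t) = \tfrac{1}{t+1}\bigl(a_i(t+1) - q_i(t)\bigr),\qquad a_i(t+1)\in BR_i^{\epsilon_t}(\bar q_{-i}(t)),
\end{equation*}
which, after the usual piecewise-constant interpolation in rescaled time $\log t$, is an asymptotic pseudotrajectory (in the sense of Benaïm) of the differential inclusion $\dot q_i \in BR_i(\bar q_{-i}) - q_i$ for each $i\in N$. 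The vanishing $\epsilon_t$ enters harmlessly through the GWFP robustness statement (Theorem~\ref{thrm_wfp}), exactly as for classical FP.

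The next step is to exploit assumptions \textbf{A.\ref{a_ident_strat}} and \textbf{A.\ref{a_perm_inv}} to collapse the coupled dynamics onto the centroid. Because every player has the same permutation-invariant utility $u$, for any $i$ the set $BR_i\bigl((\bar q,\ldots,\bar q)\bigr)$ depends only on $\bar q$, and indeed equals a common best-response correspondence $\widetilde{BR}(\bar q)$. Summing the inclusions over $i$ and dividing by $n$ I obtain, for the centroid,
\begin{equation*}
\dot{\bar q}\in \widetilde{BR}(\bar q)-\bar q,
\end{equation*}
which is the standard continuous-time best-response dynamic for the single-population identical-interest game with utility $U(\bar q^n)$. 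A Lyapunov function $V(\bar q):=\max_{\alpha\in A_1} U(\alpha,\bar q^{n-1}) - U(\bar q^n)$ is then strictly decreasing along any non-equilibrium solution, and its zero set is precisely the projection of $C$ onto the centroid coordinate. This delivers part (i): $d(\bar q^n(t),C)\to 0$.

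For part (ii), I would return to the individual inclusions $\dot q_i\in BR_i(\bar q_{-i})-q_i$. Once $\bar q^n(t)$ is within $\varepsilon$ of $C$ for all large $t$, continuity of the utilities and upper semicontinuity of $BR_i$ imply that for any limit point $q^\ast$ of $q(t)$, each coordinate $q_i^\ast$ lies in the convex hull of the $\varepsilon$-best responses to $\bar q_{-i}^\ast$; letting $\varepsilon\downarrow 0$ and using the characterization $MCE=\{p:\,p_i\in\overline{\mathrm{co}}\,BR_i(\bar p_{-i})\;\forall i\}$ places $q^\ast$ in $MCE$. I would invoke the chain-recurrence characterization of limit sets of APTs to make this rigorous, showing that the chain-recurrent set of the coupled DI, intersected with the invariant subspace $\{\bar q^n\in C\}$, is exactly $MCE$.

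The main obstacle is the second part: the centroid analysis is clean because the identical-interest structure yields a Lyapunov function on a single copy of $\Delta_1$, but lifting convergence from $\bar q^n$ to the full profile $q$ requires care, since the per-player inclusions are driven by $\bar q_{-i}$ rather than $\bar q^n$ and the individual best-response sets can be multi-valued. I would handle this by establishing that the discrepancy $\|\bar q_{-i}(t)-(\bar q(t),\ldots,\bar q(t))\|$ is $O(1/t)$ (since one term in the average is omitted), so that the asymptotic drift of each $q_i$ is determined by $\widetilde{BR}(\bar q)$ up to vanishing perturbations, and then applying the perturbed DI theory one more time to conclude $d(q(t),MCE)\to 0$.
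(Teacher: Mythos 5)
First, note that the paper does not prove Theorem \ref{theorem_ecfp_weak} internally: it is imported verbatim from the reference \cite{swenson2015weakECFP} (Theorem 1 there), so your attempt is being measured against that external argument rather than anything in this manuscript. Your overall strategy (stochastic approximation, asymptotic pseudotrajectories of a best-response differential inclusion, collapse onto the centroid using \textbf{A.\ref{a_ident_strat}}--\textbf{A.\ref{a_perm_inv}}) is a reasonable route, but as written it has a genuine gap at the Lyapunov step. The function you propose, $V(\bar q)=\max_{\alpha}U(\alpha,\bar q^{\,n-1})-U(\bar q^{\,n})$, is the duality-gap functional that works for zero-sum games; it is \emph{not} monotone along best-response trajectories in identical-interest games. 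A two-action coordination game with payoff matrix equal to the identity already breaks it: at $\bar q=(0.5+\epsilon,\,0.5-\epsilon)$ with small $\epsilon>0$ the flow $\dot{\bar q}=e_1-\bar q$ gives $\tfrac{d}{dt}V=0.5-3\epsilon+4\epsilon^2>0$, so $V$ increases away from the mixed consensus equilibrium. For the identical-interest/permutation-invariant setting the correct Lyapunov object is the common payoff $W(\bar q)=U(\bar q^{\,n})$, which is nondecreasing along solutions (by multilinearity and symmetry, $\tfrac{d}{dt}W=n\,[\,U(b,\bar q^{\,n-1})-U(\bar q^{\,n})\,]\ge 0$ for $b\in \widetilde{BR}(\bar q)$), and one must then either verify the Benaïm--Hofbauer--Sorin condition that $W$ has empty interior of values on the target set, or argue directly à la Monderer--Shapley. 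Also, when averaging the per-player inclusions you only get $\dot{\bar q}\in\mathrm{co}\,\widetilde{BR}(\bar q)-\bar q$ (the paper's $BR_i$ is pure-strategy valued), which is fine but should be said.

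Part (ii) is the substantive half and your sketch does not yet supply an argument. The claim that any limit point $q^\ast$ of $q(t)$ has each coordinate in the convex hull of $\varepsilon$-best responses to the limiting centroid does not follow from part (i): $q_i(t)$ is a running average of actions that were (approximate) best responses to the \emph{time-varying} centroid $\bar q(t)$, and $\bar q^{\,n}(t)$ approaching the set $C$ does not mean $\bar q(t)$ converges to a point---it may drift along $C$, so early and late actions in the average respond to different targets. One needs a quantitative averaging argument, e.g.\ showing $U_i(q_i(t),\bar q_{-i}(t))-v_i(\bar q_{-i}(t))\to 0$ directly from the discrete recursion, rather than a limit-point/upper-semicontinuity argument; likewise your assertion that the chain-recurrent set of the coupled inclusion intersected with $\{\bar q^{\,n}\in C\}$ equals $MCE$ is stated, not proved. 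Finally, a smaller misreading: in this paper $\bar q_{-i}(t)$ is by definition the $(n-1)$-tuple of copies of the \emph{full} centroid $\bar q(t)$ (player $i$'s own distribution is not omitted), so the $O(1/t)$ discrepancy you plan to control in your last paragraph is identically zero and that concern evaporates---but it does not repair the two gaps above.
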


Note that case (i) above corresponds to ECFP with the convergence map $f_i^\xi$ as given in Section \ref{sec_ECFP_intro}, and case (ii) corresponds to the convergence map $f_i^\xi$ given by the identity function (as in Section \ref{sec_ecfp_mce_intro}). Since, by definition, an ECFP process \eqref{ecfp_process} allows players to choose actions from the $\epsilon_t$-sub-optimal best response set with $\epsilon_t \rightarrow 0$, Theorem \ref{theorem_ecfp_weak} ensures that ECFP satisfies \textbf{A.\ref{a_robustness}}.

Let $\Psi$ be an instance of ECFP as presented in either Section \ref{sec_ECFP_intro} or Section \ref{sec_ecfp_mce_intro}, and let the strongly convergent variant of $\Psi$ be constructed using the approach laid out in Section \ref{sec_FP_type_strong}.
The following corollary to Theorem \ref{theorem_general_result} states that players engaged in the strongly convergent variant of an ECFP process learn elements of $C$ and $MCE$ in the strong sense that players' period-by-period strategies converge to equilibrium.
\begin{cor}
(i) Let $\Psi$ be an instance of ECFP with $f^{\xi}_i(q) = \frac{1}{n}\sum_jq_j,~\forall i$ and assume $\Gamma$ is such that \textbf{A.\ref{a_ident_strat}} and \textbf{A.\ref{a_perm_inv}} hold. If the strongly convergent variant of $\Psi$ satisfies \textbf{A.\ref{rho_a1}}--\textbf{A.\ref{rho_a3}} and \textbf{A.\ref{a_eta}}, then it achieves strong learning in the sense that $\lim_{t\rightarrow 0} d(g(t),~C)=0$.\\
(ii) Let $\Psi$ be an instance of ECFP with $f^{\xi}_i(q)$ given by the identity function for all $i$ and assume $\Gamma$ is such that \textbf{A.\ref{a_ident_strat}} and \textbf{A.\ref{a_perm_inv}} hold. If the strongly convergent variant of $\Psi$ satisfies \textbf{A.\ref{rho_a1}}--\textbf{A.\ref{rho_a3}} and \textbf{A.\ref{a_eta}}, then it achieves strong learning in the sense that $\lim_{t\rightarrow 0} d(g(t),~MCE)=0$.
\end{cor}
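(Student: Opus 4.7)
The plan is to prove both parts of the corollary as direct applications of Theorem \ref{theorem_general_result}, following the same template used in Corollary \ref{theorem_main_result} and Corollary \ref{cor_gwfp}. Since each part concerns an instance of ECFP, the task reduces to verifying that ECFP (in the relevant format) satisfies assumptions \textbf{A.\ref{a_general_q}}--\textbf{A.\ref{a_robustness}}, while the hypotheses \textbf{A.\ref{rho_a1}}--\textbf{A.\ref{rho_a3}} and \textbf{A.\ref{a_eta}} are assumed outright.

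For both parts (i) and (ii), the empirical distribution is $q_i(t) = \frac{1}{t}\sum_{s=1}^t a_i(s)$, so \textbf{A.\ref{a_general_q}} holds with $f^q_i(H_i(t),t) = \frac{1}{t}\sum_{s=1}^t a_i(s)$. Writing the recursion $q_i(t+1) = q_i(t) + \frac{1}{t+1}(a_i(t+1) - q_i(t))$ gives $\|q_i(t+1)-q_i(t)\| \leq M_i/(t+1) \to 0$, which secures \textbf{A.\ref{a_step_size_bound}}. The prediction map $f_i^p(q) = (\bar q,\ldots,\bar q)$ (where $\bar q = \frac{1}{n}\sum_j q_j$) is a time-invariant linear map between Euclidean spaces and hence Lipschitz continuous, verifying \textbf{A.\ref{a_prediction}}. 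For the convergence maps in the two parts: in (i), $f^\xi_i(q) = \bar q$ is again linear hence Lipschitz and time invariant; in (ii), $f^\xi_i(q) = q_i$ is the identity projection, which is trivially Lipschitz and time invariant. In both cases \textbf{A.\ref{a_xi}} holds.

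The one non-trivial step, as highlighted by the authors, is verifying \textbf{A.\ref{a_robustness}}. This reduces to showing that for any sequence $(\epsilon_t)_{t\geq 1}$ with $\epsilon_t \to 0$, an ECFP process satisfying $a_i(t+1) \in BR^{\epsilon_t}_i(\bar q_{-i}(t))$ has $\xi(t) \to E$ in the appropriate sense. But this is precisely what Theorem \ref{theorem_ecfp_weak} delivers under assumptions \textbf{A.\ref{a_ident_strat}} and \textbf{A.\ref{a_perm_inv}}: case (i) of that theorem handles the convergence map $\bar q$ with $E = C$, and case (ii) handles the identity convergence map with $E = MCE$. The definition of an ECFP process \eqref{ecfp_process} already allows an asymptotically decaying perturbation $\epsilon_t \to 0$, so the conclusions of Theorem \ref{theorem_ecfp_weak} apply uniformly over all admissible $(\epsilon_t)$, yielding \textbf{A.\ref{a_robustness}} for both formats.

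Having verified \textbf{A.\ref{a_general_q}}--\textbf{A.\ref{a_robustness}} for each case, Theorem \ref{theorem_general_result} immediately yields $\lim_{t\to\infty} d(g(t),C) = 0$ in case (i) and $\lim_{t\to\infty} d(g(t),MCE) = 0$ in case (ii). The expected main obstacle is really just bookkeeping: making sure that the two ECFP formats from Sections \ref{sec_ECFP_intro} and \ref{sec_ecfp_mce_intro} are cleanly slotted into the general FP-type framework, with the correct choice of $f^\xi_i$ and target set $E$ in each case; all analytical content is inherited from Theorem \ref{theorem_ecfp_weak} and Theorem \ref{theorem_general_result}.
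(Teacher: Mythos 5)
Your proposal is correct and follows essentially the same route as the paper's own proof: verify \textbf{A.\ref{a_general_q}}--\textbf{A.\ref{a_xi}} directly from the time-averaged empirical distribution and the linear (hence Lipschitz) prediction and convergence maps, obtain the key robustness condition \textbf{A.\ref{a_robustness}} from Theorem \ref{theorem_ecfp_weak} (noting that the ECFP definition already tolerates $\epsilon_t \to 0$ perturbations), and then invoke Theorem \ref{theorem_general_result} for both target sets $C$ and $MCE$. No gaps to report.
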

\begin{proof}
Cases (i) and (ii) differ only in terms of the function $f^{\xi}_i(t)$ and target equilibrium set $E$. However, in both cases the function $f^{\xi}_i$ satisfies \textbf{A.\ref{a_xi}}. It suffices to show the remaining conditions of Theorem \ref{theorem_general_result} are satisfied. Henceforth we treat cases (i) and (ii) equivalently.

Note that \textbf{A.\ref{rho_a1}}--\textbf{A.\ref{rho_a3}} and \textbf{A.\ref{a_eta}} hold by assumption. The empirical distribution sequence satisfies $\|q_i(t) - q_i(t-1)\| \leq \frac{M_i}{t} \rightarrow 0 \mbox{ as } t\rightarrow \infty$, where $M_i := \sup_{p',p'' \in \Delta_i} \|p' - p''\|$, and hence \textbf{A.\ref{a_step_size_bound}} is satisfied. Note that the function $f_i^p(q) = \frac{1}{n}\sum_j q_j$ satisfies \textbf{A.\ref{a_prediction}}.  Finally, Theorem \ref{theorem_ecfp_weak} shows that \textbf{A.\ref{a_robustness}} is satisfied.
\end{proof}

\section{Conclusions}
\label{sec_conclusion}
An algorithm is said to achieve weak learning if players learn an equilibrium strategy in an abstract sense (see Section \ref{sec_prelims}), but period-by-period strategies do not necessarily converge to equilibrium. An algorithm is said to achieve strong learning if (additionally) players' period-by-period strategies converge to equilibrium. Weak learning may be thought of as a form of learning where players \emph{learn} a strategy in some abstract sense, but never begin to implement the strategy they are learning. On the other hand, in strong learning, not only do players \emph{learn} a strategy, but they also physically implement the learned strategy through the course of the learning process.

Fictitious Play (FP) and its variants are known to exhibit weak learning but not necessarily strong learning. An approach was presented for taking a general FP-type algorithm that achieves weak learning, and constructing from it a strongly convergent variant of the algorithm. General convergence results were proved and used to construct a strongly convergent variant of several example FP-type processes.

In order to apply the convergence results proved in this paper, it is necessary to ensure a candidate algorithm meets \textbf{A.\ref{a_robustness}} (the other necessary assumptions are relatively trivial to verify). An interesting future research direction might be to investigate other FP-type algorithms (e.g., \cite{Arslan04,Shamma03}) and verify whether they meet the assumptions sufficient for construction of a strongly convergent variant.

\section*{Appendix}
{\small
\subsection{Some Useful Inequalities}
\label{sec_IR}
We consider some useful inequalities related to the strongly convergent variant of an FP-type algorithm. We restrict attention to realizations $\omega \in \Omega'$. Let $\{q_i(t)\}_{t\geq 1}$ be given by \eqref{qt_general_strong_update}.
By \textbf{A.\ref{a_step_size_bound}} there exists a sequence $\gamma(t)$ such that
$\lim\limits_{t\rightarrow\infty} \gamma(t) = 0,$
and for each $i\in N$,
\vskip-5pt
\begin{equation}
\|q_i(t+1) - q_i(t)\| \leq M_i\gamma(\ell_i(t)),
\label{qt_bound}
\end{equation}
where $M_i:= \sup_{q',q'' \in \Delta_i}\|q' - q''\|$.
Similarly, there holds for any integer $s>0$,
\begin{equation}
\|\tilde q(s+1) - \tilde q(s)\| \leq M\gamma(s),
\label{qs_bound1}
\end{equation}
where $M:= \sup_{q',q'' \in \Delta^n}\|q' - q''\|$.
More generally, for any integers $s_1,s_2>0$, if \textbf{A.\ref{a_step_size_bound}} holds then,
\vskip-20pt
\begin{align}
\|\tilde q(s_1) - \tilde q(s_2)\| & \leq M\sum\limits_{s=\min\{s_1,s_2\}}^{\max\{s_1,s_2\}-1} \gamma(s)\leq |s_1 - s_2|B,
\label{qs_bound2}
\end{align}
\vskip-5pt
\noindent where $0<B<\infty$ is such that $\sup_t \gamma(t) \leq B/M$.
%Furthermore, if \textbf{A.\ref{??}} holds ($\gamma$ monotone $\downarrow$) then
%\begin{align}
%\|\tilde q(s_1) - \tilde q(s_2)\| & \leq \sum\limits_{s=\min\{s_1,s_2\}}^{\max\{s_1,s_2\}-1} \frac{1}{s}\\
%& \leq |s_1 - s_2|\gamma(\min(s_1,s_2)),
%\label{qs_bound3}
%\end{align}

\subsection{Intermediate Results}
$~$\\
\vskip-5pt
\begin{lemma}
\label{lemma_BR_limit}
Let $\tau_i(s)$ be defined as in section \ref{sec_additional_defs}, and assume \textbf{A.\ref{a_eta}} holds. Then for any realization in $\Omega'$ there holds,
$\lim\limits_{s\rightarrow\infty} |U_i(a_i(\tau_i(s)),p_i(\tau_i(s)-1)) - v_i(p_i(\tau_i(s)-1))| = 0,~\forall i.$
\end{lemma}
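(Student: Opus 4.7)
The plan is to exploit the fact that, by construction of $\tau_i(s)$, the action $a_i(\tau_i(s))$ is precisely a deliberate best response, and hence lies in an $\eta_{\tau_i(s)}$-best response set against $p_i(\tau_i(s)-1)$. Since the perturbation parameters $\eta_t$ vanish by \textbf{A.\ref{a_eta}}, and since $\tau_i(s)\to\infty$ on $\Omega'$, the degree of suboptimality along this subsequence must vanish.

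Concretely, I would first unpack the definition of $\tau_i(s)$. For any realization $\omega\in\Omega'$, equation \eqref{tau_exist} guarantees $\tau_i(s)<\infty$ for all $s$, so the quantities in the statement are well defined, and \eqref{tau_lim} gives $\tau_i(s)\to\infty$. By the very definition of $\tau_i$, we have $X_i(\tau_i(s))=1$, so the action rule \eqref{action_rule1} forces
\[
a_i(\tau_i(s)) = b_i(\tau_i(s)-1) \in BR_i^{\eta_{\tau_i(s)}}(p_i(\tau_i(s)-1)).
\]
Recalling the definition \eqref{BR_epsilon_set} of the $\epsilon$-best response set, together with the definition $v_i(p_{-i})=\max_{p_i\in\Delta_i} U_i(p_i,p_{-i})$, this membership is equivalent to the two-sided bound
\[
v_i(p_i(\tau_i(s)-1)) - \eta_{\tau_i(s)} \;\leq\; U_i(a_i(\tau_i(s)),p_i(\tau_i(s)-1)) \;\leq\; v_i(p_i(\tau_i(s)-1)).
\]

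From this sandwich, $|U_i(a_i(\tau_i(s)),p_i(\tau_i(s)-1)) - v_i(p_i(\tau_i(s)-1))| \leq \eta_{\tau_i(s)}$. Letting $s\to\infty$, we have $\tau_i(s)\to\infty$ on $\Omega'$, and \textbf{A.\ref{a_eta}} yields $\eta_{\tau_i(s)}\to 0$. This delivers the claim for every $i$.

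I do not anticipate any real obstacle here: the lemma is essentially a book-keeping statement that strips off the randomization mechanism at the deliberate-best-response times $\tau_i(s)$ and reads off the defining inequality of $BR_i^{\eta_t}$. The only subtle point worth flagging is that one must restrict to $\omega\in\Omega'$ in order to ensure $\tau_i(s)$ is finite-valued (so the expressions make sense) and diverges with $s$ (so that $\eta_{\tau_i(s)}\to 0$ actually follows from \textbf{A.\ref{a_eta}}).
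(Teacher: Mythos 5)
Your proposal is correct and follows essentially the same argument as the paper's proof: observe that $X_i(\tau_i(s))=1$ by definition of $\tau_i$, invoke the action rule \eqref{action_rule1} to place $a_i(\tau_i(s))$ in $BR_i^{\eta_{\tau_i(s)}}(p_i(\tau_i(s)-1))$, read off the bound $|U_i(a_i(\tau_i(s)),p_i(\tau_i(s)-1)) - v_i(p_i(\tau_i(s)-1))| \leq \eta_{\tau_i(s)}$, and conclude via $\tau_i(s)\to\infty$ on $\Omega'$ together with \textbf{A.\ref{a_eta}}. The only cosmetic difference is that you correctly write the deliberate best response as $b_i(\tau_i(s)-1)$, matching \eqref{action_rule1}, where the paper's proof writes $b_i(\tau_i(s))$; this does not affect the argument.
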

\begin{proof}
Let $s\in \mathbb{N}$. Note that by definition $\tau_i(s) := \inf\{t:\ell_i(t) = s\}$ and $\ell_i(t) := \sum_{k=1}^t X_i(k)$, thus $X_i(\tau_i(s)) = 1$. By \eqref{action_rule1} this implies
$a_i(\tau_i(s)) = b_i(\tau_i(s)) \in BR_i^{\eta_{\tau_i(s)}}(p_i(\tau_i(s)-1)),$
which implies
$|U_i(a_i(\tau_i(s)),p_i(\tau_i(s)-1)) - v_i(p_i(\tau_i(s)-1))| \leq \eta_{\tau_i(s)}.$
By \textbf{A.\ref{a_eta}}, $\eta_t \rightarrow 0$ as $t\rightarrow \infty$, and moreover, by \eqref{tau_lim}, $\tau_i(s) \rightarrow \infty$ as $s\rightarrow \infty$. Thus $\eta_{\tau_i(s)} \rightarrow 0$ as $s\rightarrow \infty$, and the claim holds.
\end{proof}

\begin{lemma}
Let $i,j\in N$, let $\tau_i(s)$ and $\tilde q_j(s)$ be defined as in Section \ref{sec_additional_defs}, and assume \textbf{A.\ref{rho_a2}}--\textbf{A.\ref{rho_a3}} hold. Then for any realization in $\Omega'$,
$\lim_{s\rightarrow\infty} \|  q_j(\tau_i(s)) - \tilde q_j(s)\| = 0.$
\label{IR1}
\end{lemma}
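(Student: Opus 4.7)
The strategy is to unpack the notation so as to reduce the statement to one about the embedded sequence $\{\tilde q_j(\cdot)\}$, then exploit the ratio of best-response counts together with the step-size bound.

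First, I would use the fact that $q_j$ is constant on the intervals between consecutive deliberate best responses of player $j$. Indeed, \eqref{q_step_equality} gives $q_j(t) = q_j(t-1)$ whenever $X_j(t) = 0$, so iterating backward from $\tau_i(s)$ to the most recent round in which player $j$ took a deliberate best response yields
$q_j(\tau_i(s)) = q_j(\tau_j(\ell_j(\tau_i(s)))).$
Setting $m := \ell_j(\tau_i(s))$ and recalling $\tilde q_j(m) := q_j(\tau_j(m))$, this reduces the lemma to showing $\lim_{s \to \infty}\|\tilde q_j(m) - \tilde q_j(s)\| = 0.$

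Second, I would establish $m/s \to 1$. On $\Omega'$ the defining property gives $\ell_k(t)/\sum_{r=1}^t \rho_k(r) \to 1$ for every $k \in N$. Combining this with \textbf{A.\ref{rho_a3}}, which guarantees $\sum_r \rho_i(r)/\sum_r \rho_j(r) \to 1$, yields $\ell_i(t)/\ell_j(t) \to 1$ as $t \to \infty$. Substituting $t = \tau_i(s)$ and using $\ell_i(\tau_i(s)) = s$ from \eqref{ell_tau_eq} gives $s/m = \ell_i(\tau_i(s))/\ell_j(\tau_i(s)) \to 1$, so in particular $|s-m|/s \to 0$.

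Third, I would control $\|\tilde q_j(m) - \tilde q_j(s)\|$ using the telescoping bound from \eqref{qs_bound1}--\eqref{qs_bound2}. Assume without loss of generality $m \leq s$. Then
$\|\tilde q_j(s) - \tilde q_j(m)\| \leq M_j \sum_{k=m}^{s-1} \gamma(k),$
where $\gamma(k) \to 0$ by \textbf{A.\ref{a_step_size_bound}}. Since $m \to \infty$ and the ratio $s/m$ tends to $1$, the summation is driven to zero by the joint effect of $\gamma(k)\to 0$ and $|s-m|/s \to 0$; in the concrete instantiations of interest (classical FP, GWFP, ECFP) the step size behaves like $\gamma(k) = O(1/k)$, so the sum is $O(\log(s/m)) \to 0$.

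The main obstacle is the last step: the bare condition $\gamma(k) \to 0$ combined with $m/s \to 1$ does not in general force the telescoping sum to vanish when $|s-m|$ can grow unboundedly. In the examples considered, this is resolved either by appealing to the $O(1/k)$ decay of the step size in GWFP-type updates, or, for the pure-averaging examples (classical FP and ECFP), by noting the stronger pointwise identity $\tilde q_j(s) = s^{-1}\sum_{k=1}^s \tilde a_j(k)$, which gives $\|\tilde q_j(s) - \tilde q_j(m)\| \leq 2M_j (s-m)/s$ directly from the second step.
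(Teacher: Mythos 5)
Your opening reduction coincides with the paper's: by \eqref{q_step_equality} (equivalently Lemma \ref{IR6}) you get $q_j(\tau_i(s)) = \tilde q_j(m)$ with $m := \ell_j(\tau_i(s))$, and since $\ell_i(\tau_i(s)) = s$ by \eqref{ell_tau_eq}, the lemma reduces to comparing $\tilde q_j$ at the indices $m$ and $s$. The genuine gap is the one you flag yourself: you only establish the ratio statement $m/s \to 1$, and under \textbf{A.\ref{a_step_size_bound}} --- which gives nothing beyond $\gamma(t)\to 0$ --- the telescoping bound $\|\tilde q_j(s)-\tilde q_j(m)\| \le M\sum_{k=m\wedge s}^{(m\vee s)-1}\gamma(k)$ cannot be closed from $m/s\to 1$ alone (e.g.\ $\gamma(k)=1/\log k$ with $|s-m|\asymp s/\log s$ defeats it). Your proposed patches do not rescue the statement in the generality in which it is used: the $O(1/k)$ decay and the exact-averaging identity $\tilde q_j(s)=s^{-1}\sum_{k\le s}\tilde a_j(k)$ are features of the particular examples (classical FP, GWFP with $1/t$ steps, ECFP), whereas Lemma \ref{IR1} is invoked in the proof of Theorem \ref{theorem_general_result} for an arbitrary FP-type algorithm satisfying only \textbf{A.\ref{a_general_q}}--\textbf{A.\ref{a_robustness}}. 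As written, the proposal therefore proves a weaker statement than the lemma.

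The paper closes the argument by a different, stronger count comparison: using \eqref{qs_bound2} it bounds $\|\tilde q_j(\ell_j(\tau_i(s))) - \tilde q_j(\ell_i(\tau_i(s)))\| \le B\,|\ell_j(\tau_i(s)) - \ell_i(\tau_i(s))|$ with a fixed constant $B$, and then argues that the \emph{difference} of the counts, not merely their ratio, vanishes: with $h_k(t)=\sum_{m\le t}\rho_k(m)$, the definition of $\Omega'$ (via Lemma \ref{IR3}) gives $\ell_k(t)/h_k(t)\to 1$, \textbf{A.\ref{rho_a3}} gives $h_k(t)/h_i(t)\to 1$, hence $\ell_k(t)/h_i(t)\to 1$ for every $k$ (\eqref{IR1_eq4}), from which the paper concludes $\limsup_t |\ell_j(t)-\ell_i(t)| = 0$ in \eqref{IR1_eq3}. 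That passage from ratios to a vanishing difference is exactly the ingredient missing from your argument, and it is what makes the constant-$B$ bound sufficient with no rate assumption on $\gamma$. To complete your proof along the paper's lines you would need to show $\ell_j(\tau_i(s)) - s \to 0$, not just $\ell_j(\tau_i(s))/s \to 1$ (note that this difference-vanishing step is itself the delicate point, since both counts diverge, so it deserves an explicit argument rather than the ratio statement you stop at).
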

\begin{proof}
Note that for any $t\in \mathbb{N}$,
$q_j(t) = q_j(\tau_j(\ell_j(t))) =  \tilde q_j(\ell_j(t)),$
where the first equality follows from Lemma \ref{IR6}, and the second equality follows from the definition of $\tilde q_i(s)$. Hence,
\vskip-15pt
\begin{align}
\|q_j(\tau_i(s)) - \tilde q_j(s)\| = \|\tilde q_j(\ell_j(\tau_i(s))) - \tilde q_j(s)\|
& = \|\tilde q_j(\ell_j(\tau_i(s))) - \tilde q_j(\ell_i(\tau_i(s)))\|\\
& \leq \vert\ell_j(\tau_i(s)) - \ell_i(\tau_i(s))\vert B,
\label{IR1_eq1}
\end{align}
\vskip-5pt
where the first equality follows from the previous statement, and the second equality follows from the fact that $\ell_i(\tau_i(s)) = s$ (see \eqref{ell_tau_eq}), and the final inequality follows from \eqref{qs_bound2}. Thus, it suffices to show that
\vskip-10pt
\begin{equation}
\lim\limits_{s\rightarrow\infty} \vert\ell_j(\tau_i(s)) - \ell_i(\tau_i(s))\vert = 0.
\label{IR1_eq2}
\end{equation}
For convenience in notation let $h_i(t) := \sum_{m=1}^t \rho_i(m)$. By Lemma \ref{IR3} and the definition of $\Omega'$ there holds for any $k\in N$,
$\lim_{t\rightarrow\infty} \frac{ \ell_k(t) }{h_k(t)} = 1.$
By assumption \textbf{A.\ref{rho_a3}}, for any $k\in N$, $\lim_{t\rightarrow\infty} \left(h_k(t) / (h_i(t)\right) = 1.$ Hence, for any $k\in N$,
\begin{align}
\lim\limits_{t\rightarrow\infty} \frac{\ell_k(t)}{h_i(t)} = \lim\limits_{t\rightarrow\infty} \frac{\ell_k(t)}{h_k(t)} \frac{h_k(t)}{h_i(t)}
=1.
\label{IR1_eq4}
\end{align}
Returning attention to \eqref{IR1_eq2} and recalling that by \eqref{tau_lim}, $\lim_{s\rightarrow\infty}\tau_i(s)=\infty$ on $\Omega'$, we have,
\vskip-10pt
\begin{align}
\limsup\limits_{s\rightarrow\infty}\left| \ell_j(\tau_i(s)) - \ell_i(\tau_i(s))\right|
\leq \limsup\limits_{t\rightarrow\infty} \left|\ell_j(t) - \ell_i(t) \right|
& = \limsup\limits_{t\rightarrow\infty} \left| \frac{\ell_j(t)}{h_i(t)} h_i(t) - \frac{\ell_i(t)}{h_i(t)}h_i(t)\right|\\
& = \limsup\limits_{t\rightarrow\infty} \left|h_i(t) - h_i(t)\right| = 0,
\label{IR1_eq3}
\end{align}
\vskip-5pt
\noindent where the transition to the last line follows from application of \eqref{IR1_eq4}.
Thus, \eqref{IR1_eq2} is verified, and the desired result holds.
\end{proof}

\begin{lemma}
Let $i,j\in N$, let $\hat q_j^i(s)$ and $\tilde q_j(s)$ be defined as in Section \ref{sec_additional_defs}, and assume \textbf{A.\ref{rho_a2}}--\textbf{A.\ref{rho_a3}} hold. Then for any realization in $\Omega'$ there holds
$\lim_{s\rightarrow\infty} \|  \hat q_j^i (s) - \tilde q_j(s)\| = 0.$
\label{IR0}
\end{lemma}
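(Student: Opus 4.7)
My plan is to apply the triangle inequality to $\|\hat q_j^i(s)-\tilde q_j(s)\|$ with the two intermediate points $q_j(\tau_i(s+1))$ and $\tilde q_j(s+1)$, reducing the claim to three one-step control estimates that are already available in the paper.

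Specifically, I would start from
\[
\|\hat q_j^i(s) - \tilde q_j(s)\|
\leq
\|q_j(\tau_i(s+1)-1) - q_j(\tau_i(s+1))\|
+ \|q_j(\tau_i(s+1)) - \tilde q_j(s+1)\|
+ \|\tilde q_j(s+1) - \tilde q_j(s)\|,
\]
using $\hat q_j^i(s)=q_j(\tau_i(s+1)-1)$ and $\tilde q_j(s)=q_j(\tau_j(s))$ together with the definition of $\tilde q_j$. The first term on the right is a single-step increment of the global-time process $\{q_j(t)\}$, which by \eqref{qt_bound} is bounded by $M_j\gamma(\ell_j(\tau_i(s+1)-1))$; on $\Omega'$ we have $\tau_i(s+1)\to\infty$ and $\ell_j(t)\to\infty$, so the vanishing of $\gamma(\cdot)$ forces this summand to zero. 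The second term is precisely the quantity controlled by Lemma~\ref{IR1}, applied with $s+1$ in place of $s$. The third term is a single-step increment of the embedded process $\{\tilde q_j(s)\}_{s\geq 1}$; by Lemma~\ref{q_tilde_lemma} this sequence is itself of the form prescribed by \textbf{A.\ref{a_general_q}}, so \textbf{A.\ref{a_step_size_bound}} (equivalently \eqref{qs_bound1}) makes this increment tend to zero as well.

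Summing the three bounds then gives the claim. I do not expect any serious obstacle: the lemma is essentially a bookkeeping step that bridges player $i$'s ``freeze time'' $\tau_i(s+1)-1$, at which its estimate $\hat q_j^i(s)$ of player $j$'s empirical distribution is recorded, with player $j$'s natural index $s$ in the embedded process. All of the substantive work on controlling the asynchrony between the two players is already packaged inside Lemma~\ref{IR1}, which is the only nontrivial input used here.
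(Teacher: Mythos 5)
Your proposal is correct and follows essentially the same route as the paper: the paper likewise writes $\hat q_j^i(s)=q_j(\tau_i(s+1)-1)$, controls the one-step increment of $\{q_j(t)\}$ via \eqref{qt_bound}, applies Lemma~\ref{IR1} at index $s+1$, and uses \eqref{qs_bound1} for $\|\tilde q_j(s+1)-\tilde q_j(s)\|$, merely organizing the same three estimates in two successive triangle inequalities rather than one. No gaps; your decomposition is just a slightly more explicit writing of the identical argument.
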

\begin{proof}
Recall that by definition, $\hat q_j^i(s) = q_j(\tau_i(s+1)-1)$; our objective then is to show that
$\lim_{s\rightarrow\infty} \|  q_j (\tau_i(s+1)-1) - \tilde q_j(s)\| = 0.$
By Lemma \ref{IR1},
$\lim\limits_{s\rightarrow\infty} \|q_j(\tau_i(s)) - \tilde q_j(s)\| = 0.$
By \eqref{qs_bound1} and \textbf{A.\ref{a_step_size_bound}} there holds,
$\lim\limits_{s\rightarrow\infty} \|\tilde q_j(s+1) - \tilde q_j(s)\| = 0.$
Combining this with the previous statement,
\vskip-10pt
\begin{equation}
\lim\limits_{s\rightarrow\infty} \|q_j(\tau_i(s+1)) - \tilde q_j(s)\|=0.
\label{IR0_eq2}
\end{equation}
\vskip-5pt
\noindent Recalling \eqref{qt_bound}, there holds,
\vskip-15pt
\begin{align}
\limsup\limits_{s\rightarrow\infty} \|q_j(\tau_i(s+1)-1) - q_j(\tau_i(s+1))\| \leq \limsup\limits_{s\rightarrow\infty}M_{j} \gamma(\ell_j(\tau_i(s+1))) = 0,
\label{IR0_eq3}
\end{align}
\vskip-10pt
\noindent where the equality holds since $\lim_{s\rightarrow\infty} \ell_j(\tau_i(s)) = \infty$ on $\Omega'$, and by \textbf{A.\ref{a_step_size_bound}}, $\lim_{s\rightarrow\infty} \gamma(s) = 0$.

Consider now the quantity of interest,{\small
\vskip-15pt
\begin{align}
\|q_j(\tau_i(s+1)-1) - \tilde q_j(s)\| \leq & \|q_j(\tau_i(s+1)-1) - q_j(\tau_i(s+1))\| + \|q_j(\tau_i(s+1)) - \tilde q_j(s)\|.
\end{align}
}
\vskip-15pt
\noindent The first term on the right hand side (RHS) goes to zero by \eqref{IR0_eq3} and the second term on the RHS goes to zero by \eqref{IR0_eq2}. Thus,
$\lim\limits_{s\rightarrow\infty} \|q_j(\tau_i(s+1)-1) - \tilde q_j(s)\| = 0,$ and the claim holds.
\end{proof}

\begin{lemma}
Let $i\in N$, let $q_i(\cdot)$ be as defined in \eqref{qt_general_strong_update}, let $\ell_i(\cdot)$ be as defined in \eqref{ell_def2}, and let $\tau_i(\cdot)$ be as defined in \eqref{tau_def}. Then for every realization in $\Omega'$ and any $t\in\{1,2,\ldots\}$ there holds $q_i(\tau_i(\ell_i(t))) = q_i(t).$
\label{IR6}
\end{lemma}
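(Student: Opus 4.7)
The proof is essentially an unfolding of definitions, so the plan is short. Restricting to $\omega \in \Omega'$ (so that all $\tau_i(s)$ are finite), fix $t\in\{1,2,\ldots\}$, set $s:=\ell_i(t)$, and let $t':=\tau_i(s)=\tau_i(\ell_i(t))$. Since $q_i(\cdot)$ is produced by the formula $q_i(\cdot)=f_i^q(\bar H_i(\cdot),\ell_i(\cdot))$ from \eqref{qt_general_strong_update}, it suffices to prove the two equalities $\ell_i(t')=\ell_i(t)$ and $\bar H_i(t')=\bar H_i(t)$.

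First I would handle the counter. By the definition $\tau_i(s):=\inf\{u:\ell_i(u)=s\}$, on $\Omega'$ the infimum is attained, so $\ell_i(\tau_i(s))=s$ (this is already recorded in \eqref{ell_tau_eq}). Substituting $s=\ell_i(t)$ yields $\ell_i(t')=\ell_i(\tau_i(\ell_i(t)))=\ell_i(t)$. Note also $t'\le t$, since $\ell_i$ is nondecreasing and $\ell_i(t)=s$, so the infimum defining $\tau_i(s)$ cannot exceed $t$.

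Next I would show the action history coincides. By the definition $\bar H_i(u)=\{a_i(\tau_i(r)):\tau_i(r)\le u\}$, it suffices to verify
\[
\{r\ge 1:\tau_i(r)\le t\}=\{r\ge 1:\tau_i(r)\le t'\}=\{1,\ldots,\ell_i(t)\}.
\]
On $\Omega'$, $\ell_i$ is an integer-valued, nondecreasing step function jumping only by $1$, so $\tau_i(r)\le u$ if and only if $r\le\ell_i(u)$: the forward direction follows from monotonicity and $\ell_i(\tau_i(r))=r$, and the converse from the fact that $\ell_i(u)\ge r$ forces some $u'\le u$ with $\ell_i(u')=r$, whence $\tau_i(r)\le u$. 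Applying this characterization with $u=t$ and $u=t'$, and using $\ell_i(t')=\ell_i(t)$, gives the claimed set equality and hence $\bar H_i(t)=\bar H_i(t')$.

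Combining the two equalities, $q_i(t)=f_i^q(\bar H_i(t),\ell_i(t))=f_i^q(\bar H_i(t'),\ell_i(t'))=q_i(t')=q_i(\tau_i(\ell_i(t)))$. There is no real obstacle here; the only subtlety is being careful that all the $\tau_i(r)$'s appearing in the history set are finite, which is exactly why attention is restricted to $\Omega'$ via \eqref{tau_exist}.
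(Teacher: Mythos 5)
Your proof is correct and follows essentially the same route as the paper's: both arguments reduce to the observation that neither $\ell_i$ nor $\bar H_i$ changes between $t_0 := \tau_i(\ell_i(t))$ and $t$, so $q_i = f_i^q(\bar H_i(\cdot),\ell_i(\cdot))$ from \eqref{qt_general_strong_update} is unchanged there. The only cosmetic difference is that the paper iterates the one-step invariance \eqref{q_step_equality} (that $X_i(t')=0$ forces $q_i(t')=q_i(t'-1)$) over $t_0 < t' \le t$, whereas you compare the two arguments of $f_i^q$ directly at the endpoints via the identity $\{r:\tau_i(r)\le u\}=\{1,\ldots,\ell_i(u)\}$ together with $\ell_i(\tau_i(s))=s$.
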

\begin{proof}
Let
$t_0 := \tau_i(\ell_i(t)) = \inf\{t':\ell_i(t') = \ell_i(t)\},$ where the second equality follows from the definition of $\tau_i(\cdot)$. Note that $t_0 \leq t$ and by definition of $t_0$, there holds $\tau_i(\ell_i(t_0)) = t_0$, and hence
$ q_i(\tau_i(\ell_i(t_0)))=q_i(t_0).$
Furthermore, by the definition of $t_0$, for $t_0  \leq t' \leq t$, there holds $\ell_i(t) = \ell_i(t') = \ell_i(t_0)$, and hence
$\tau_i(\ell_i(t)) = \tau_i(\ell_i(t_0)).$
Moreover, the fact that $\ell_i(t) = \ell_i(t') = \ell_i(t_0)$ implies by definition of $\ell_i(\cdot)$ that $X_i(t') = 0$ for $t_0 < t' \leq t$ (if such a $t'$ exists). Thus, by \eqref{q_step_equality} there holds $q_i(t) = q_i(t') = q_i(t_0)$ for $t_0 \leq t'\leq t$, and in particular $q_i(t) = q_i(t_0).$
Combining this with the facts that $q_i(\tau_i(\ell_i(t_0)))=q_i(t_0)$ and $\tau_i(\ell_i(t)) = \tau_i(\ell_i(t_0))$ yields the desired result.
\end{proof}

\begin{lemma}
\label{q_tilde_lemma}
Let $\Psi = (\{f^q_i(\cdot,t)\}_{t\geq 1},f_i^p,f^\xi_i)_{i\in N}$ be an FP-type algorithm, and let the strongly convergent variant of $\Psi$ be constructed as in Section \ref{sec_FP_type_strong}.
Let $\tilde a(s)$, $\tilde H(s)$, and $\tilde q_i(s)$ be as defined in Section \ref{sec_additional_defs}. Then for every realization in $\Omega'$, and for $s\geq 1$,
$\tilde q_i(s) = f_i^q(\tilde H(s),s).$
\end{lemma}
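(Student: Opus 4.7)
The statement is a definitional identity once one unwinds what each symbol means, so the plan is essentially a careful bookkeeping exercise rather than a substantive argument.

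The starting point is the recipe for $q_i(t)$ prescribed by the strongly convergent variant of an FP-type algorithm, namely \eqref{qt_general_strong_update}: $q_i(t) = f_i^q(\bar H_i(t),\,\ell_i(t))$. I would restrict to a realization $\omega\in\Omega'$, which by \eqref{tau_exist} guarantees that $\tau_i(s)<\infty$ for every $s\geq 1$, so that $q_i(\tau_i(s))$ and $\bar H_i(\tau_i(s))$ are well defined. Substituting $t=\tau_i(s)$ into the formula above yields
\begin{equation}
\tilde q_i(s) := q_i(\tau_i(s)) = f_i^q\bigl(\bar H_i(\tau_i(s)),\,\ell_i(\tau_i(s))\bigr).
\end{equation}

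To reach the claim it remains only to identify each argument with the corresponding tilde-object. The first argument is already the definition of $\tilde H_i(s)$ given in Section \ref{sec_additional_defs}: $\tilde H_i(s):=\bar H_i(\tau_i(s))$. For the second argument I would invoke \eqref{ell_tau_eq}, which was established in Section \ref{sec_useful_props} for every $\omega\in\Omega'$ and asserts $\ell_i(\tau_i(s))=s$. Plugging these two identifications into the displayed equation gives $\tilde q_i(s)=f_i^q(\tilde H_i(s),\,s)$, which is the claim.

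There is no real obstacle; the only thing that requires any care is to make sure one works on $\Omega'$ so that both $\tau_i(s)<\infty$ and the key index identity $\ell_i(\tau_i(s))=s$ hold. Neither of these properties is automatic off $\Omega'$, which is presumably why the lemma is stated for realizations in $\Omega'$. Once restricted to $\Omega'$, the proof is a one-line substitution.
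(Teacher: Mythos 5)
Your proof is correct and follows essentially the same chain as the paper's: substitute $t=\tau_i(s)$ into $q_i(t)=f_i^q(\bar H_i(t),\ell_i(t))$ from \eqref{qt_general_strong_update}, then identify $\bar H_i(\tau_i(s))=\tilde H_i(s)$ by definition and $\ell_i(\tau_i(s))=s$ via \eqref{ell_tau_eq}, working on $\Omega'$ so that $\tau_i(s)$ is finite. No gaps.
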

\begin{proof}
For $s\geq 1$, note that
$\tilde q_i(s) = q_i(\tau_i(s)) = f_i^q(\bar H_i(\tau_i(s)),\ell_i(\tau_i(s))) = f_i^q(\tilde H(s),s)$, where the first equality follows from the definition of $\tilde q_i(s)$ in Section \ref{sec_additional_defs}, the second follows from \textbf{A.\ref{a_general_q}}, and the third follows from the definition of $\tilde H_i(s)$ in Section \ref{sec_additional_defs} and \eqref{ell_tau_eq}.
\end{proof}

\begin{lemma}
Let $\{X(t)\}_{t\geq 1}$ be $0 - 1$ Bernoulli random variables, let $\ell(t) := \sum_{k=1}^t X(k)$ be the associated counting process, let $\mathcal{G}_t := \sigma(\{X(k)\}_{k=1}^t)$, and let $\rho(t) = \mathbb{P}(X(t) = 1|\mathcal{G}_{t-1})$.  Assume $\sum_{t\geq 1} \rho(t) = \infty$. Then there holds, $\lim\limits_{t\rightarrow\infty} \left(\ell(t)\right)/\left(\sum_{k=1}^t \rho(t)\right) = 1, ~\mbox{ a.s.}$
\label{IR3}
\end{lemma}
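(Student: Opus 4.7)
The plan is to apply Doob's decomposition together with the martingale convergence theorem and Kronecker's lemma. Set $S(t) := \sum_{k=1}^t \rho(k)$; since each $\rho(k)$ is $\mathcal{G}_{k-1}$-measurable, $S(t)$ is $\mathcal{G}_{t-1}$-measurable, and $S(t)\to\infty$ a.s.\ by assumption. Then $M(t) := \ell(t) - S(t) = \sum_{k=1}^t (X(k)-\rho(k))$ is a martingale with respect to $\{\mathcal{G}_t\}$ because $\mathbb{E}[X(k)-\rho(k)\mid\mathcal{G}_{k-1}]=0$. The claim reduces to showing $M(t)/S(t)\to 0$ a.s., for then $\ell(t)/S(t) = 1 + M(t)/S(t)\to 1$.

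To establish $M(t)/S(t)\to 0$, I would introduce the transformed martingale $N(t) := \sum_{k=t_0+1}^t (X(k)-\rho(k))/S(k)$, where $t_0$ is a random time after which $S(t)>0$ (such a $t_0$ exists a.s.\ because $S(t)\to\infty$). The conditional variance of the increments satisfies $\mathbb{E}[(N(k)-N(k-1))^2\mid\mathcal{G}_{k-1}] = \rho(k)(1-\rho(k))/S(k)^2 \leq \rho(k)/S(k)^2$, and a standard telescoping estimate bounds the resulting tail sum via
\begin{equation}
\sum_{k=t_0+1}^{\infty}\frac{\rho(k)}{S(k)^2} \leq \sum_{k=t_0+1}^{\infty}\frac{S(k)-S(k-1)}{S(k)S(k-1)} = \sum_{k=t_0+1}^{\infty}\left(\frac{1}{S(k-1)}-\frac{1}{S(k)}\right) \leq \frac{1}{S(t_0)} < \infty.
\end{equation}
Hence $N(t)$ is an $L^2$-bounded martingale, so by Doob's martingale convergence theorem it converges almost surely to a finite limit.

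Finally, setting $a_k := X(k)-\rho(k)$ and $b_k := S(k)$, the a.s.\ convergence of $\sum_k a_k/b_k$ together with $b_t\nearrow\infty$ allows Kronecker's lemma to conclude that $(1/b_t)\sum_{k=1}^t a_k = M(t)/S(t)\to 0$ almost surely, which is exactly what is needed. The only nontrivial step is the telescoping estimate controlling $\sum \rho(k)/S(k)^2$, which furnishes the $L^2$-boundedness required to invoke Doob's theorem; once that is in hand, Kronecker's lemma delivers the conclusion as a routine application of standard tools.
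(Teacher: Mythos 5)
Your argument is, in substance, the standard martingale proof of L\'evy's extension of the Borel--Cantelli lemmas, which is exactly the result the paper invokes for this lemma (its proof is a one-line citation of Williams, p.~124); so you are not taking a genuinely different route, you are writing out the proof of the cited theorem. The skeleton---Doob decomposition $\ell(t)=S(t)+M(t)$, a weighted martingale transform, and a pathwise application of Kronecker's lemma---is the right one, and the reduction of the lemma to $M(t)/S(t)\to 0$ is correct.

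One step does not hold as stated: the claim that $N(t)$ is an $L^2$-\emph{bounded} martingale. Your telescoping estimate bounds $\sum_{k>t_0}\rho(k)/S(k)^2$ by $1/S(t_0)$, which is a random variable; $L^2$-boundedness requires $\sup_t\mathbb{E}[N(t)^2]<\infty$, and $\mathbb{E}[1/S(t_0)]$ can be infinite (nothing prevents $S(t_0)=\rho(t_0)$ from being tiny on the event where $t_0$ is large), so Doob's $L^2$ convergence theorem is not available in the form you invoke it. What your estimate actually shows is that the predictable quadratic variation $\langle N\rangle_\infty$ is a.s.\ finite, and you should then appeal to the corresponding theorem: a square-integrable martingale converges a.s.\ on the event $\{\langle N\rangle_\infty<\infty\}$ (Williams, Ch.~12); square-integrability of each $N(t)$ is automatic here since each $\mathcal{G}_{k-1}$ is finitely generated, so $\mathbf{1}\{k>t_0\}/S(k)$ is simple and bounded. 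Alternatively---and this is how the textbook proof sidesteps both this issue and the random time $t_0$ altogether---weight by $1+S(k)$ instead of $S(k)$: the increments $(X(k)-\rho(k))/(1+S(k))$ are defined for all $k$, the telescoping bound becomes the deterministic constant $1$, the martingale is genuinely $L^2$-bounded, and Kronecker's lemma with $b_t=1+S(t)\to\infty$ gives $M(t)/(1+S(t))\to 0$, hence $M(t)/S(t)\to 0$ because $S(t)\to\infty$. With either repair your proof is complete.
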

\begin{proof}
The result follows via Levi's extension of the Borel-Cantelli Lemmas, \cite{williams_book} p.124.
\end{proof}
}

\bibliographystyle{unsrt}
\bibliography{myRefs}

\end{document}